\newtheorem{theorem}{Theorem}[section]
\newtheorem{example}[theorem]{Example}
\newtheorem{proposition}[theorem]{Proposition}
\newtheorem{corollary}[theorem]{Corollary}
\newtheorem{definition}[theorem]{Definition}
\newtheorem{remark}[theorem]{Remark}
\newcommand{\F}{\mathcal F}
\newcommand{\G}{\mathcal G}
\newcommand{\mS}{\mathcal{S}}
\newcommand{\K}{\mathcal{K}}
\newcommand{\Hd}{\mathcal{H}}
\newcommand{\I}{\mathcal I}
\newcommand{\N}{\mathbb N}
\newcommand{\R}{\mathbb R}
\newcommand{\on}{\operatorname}
\begin{document}

\journal{J Math Anal Appl}

\begin{frontmatter}

\title{On a typical compact set as the attractor of generalized iterated function systems of infinite order}

%\subjclass[2010]{Primary: 28A80; Secondary: 37C25, 37C70} 

\author{{\L}ukasz Ma\'slanka}
\address{Institute of Mathematics, Polish Academy of Sciences, \'Sniadeckich 8, 00-656 Warszawa, Poland}
\ead{lmaslanka@impan.pl}

\begin{abstract}
In 2013 Balka and M\'ath\'e showed that in uncountable polish spaces the typical compact set is not a fractal of any IFS. \\
In 2008 Miculescu and Mihail introduced a concept of a \emph{generalized iterated function system} (GIFS in short), a particular extension of classical IFS, in~which they considered families of mappings defined on finite Cartesian product $X^m$ with values in $X$.  
Recently, Secelean extended these considerations to mappings defined on the space $\ell_\infty(X)$ of all bounded sequences of elements of $X$ endowed with supremum metric. \\
In the paper we show that in Euclidean spaces a typical compact set is an~attractor in sense of Secelean and that in general in the polish spaces it can be perceived as selfsimilar in such sense.
\end{abstract}

\begin{keyword}
fractals \sep iterated function system \sep generalized iterated function system \sep typical compact set
\end{keyword}

\end{frontmatter}

%\maketitle

\section{Introduction}
%\subsection{Iterated function systems and the Hutchinson--Barnsley theorem}
Let $(X,d)$ be a metric space. By $\K(X, d)$ (shortly $\K(X)$) we denote the space of all nonempty and~compact subsets of $X$, endowed with the~Hausdorff-Pompeiu metric
$$H^d(K,D):=\max\left\{\sup\{\inf\{d(x,y):x\in K\}:y\in D\},\sup\{\inf\{d(x,y):x\in D\}:y\in K\}\right\}.$$
If $d$ is fixed, we will write $H:=H^d$. It is well known that $H^d$ induces the topology consistent with \emph{Vietoris topology} on $\K(X)$, i.e. the topology which base consists of nonempty sets 
$$U[U_1, ..., U_n] := \left\{K \in \K(X): K \subset U, \ K \cap U_i \neq \emptyset \right\}$$
where $n\in\N$ and $U, U_1, ..., U_n \subset X$ are open. \\
Moreover, $\K(X)$ is complete (compact), provided $X$ is complete (compact). 

Let $(X,d), (Y, \rho)$ be metric spaces and $f:X \to Y$. By $\on{Lip}(f)$ we denote the Lipschitz constant of $f$. We say that $f$ is a \emph{contraction} if $\on{Lip}(f)<1$ and that it is a \emph{weak contraction} if $\rho(f(x), f(y)) < d(x,y)$ provided $x \neq y$. We say that a family $\F:=\{f_1, ..., f_n\}$ is an \emph{iterated function system} (\emph{IFS} in~short) if $f_i: X \to X, i=1,...,n,$ are contractions. We call $\F$ a \emph{weak IFS} if~$f_i: X \to X$ are weak contractions. 

The classical Hutchinson-Barnsley theorem (\cite{Hutchinson1981},\cite{Barnsley1993}) states that if $X$ is a~complete metric space and $\F$ is an IFS then there exists a unique \mbox{$A_\F \in \K(X)$} such that 
$$A_\F = \bigcup_{i=1}^n f_i(A_\F).$$
The set $A_\F$ is called the \emph{fractal} or the \emph{attractor of the IFS $\F$}. Sometimes it is also called \emph{self-similar set}. 

In 2008 some interesting variation on the notion of an IFS was considered. More precisely, Mihail and Miculescu (see \cite{Mihail2008}, \cite{Mihail2008b}, \cite{Mihail2010}) introduced the notion of a \emph{generalized iterated function systems} (\emph{GIFSs} in short), in which they considered families of contractions defined on a finite Cartesian product~$X^m$ (equipped with the maximum metric) of a metric space $X$ with its values in $X$. They obtained a counterpart of the H-B theorem which states that if $X$ is complete and $\G := \{g_1, ..., g_n\}$ is a GIFS, then there exists a unique $A_\G \in \K(X)$ such that
\begin{equation}\label{gifsG}
A_\G = \bigcup_{i=1}^n g_i(A_\G \times ... \times A_\G).
\end{equation}
$A_\G$ is called the \emph{fractal} or the \emph{attractor of a GIFS $\G$}. (\ref{gifsG}) shows that GIFSs' fractals are also self-similar, but in more complex way.

Another step in an attempt to describe sets in the language of self-similarity was done by Secelean who considered families of mappings defined on the space $\ell_\infty(X)$ of all bounded sequences in $X$ (endowed with supremum metric) with values in $X$. A finite family of contractions from $\ell_\infty(X)$ to $X$ we will call a \emph{generalized iterated function system of order infinity} (\emph{GIFS$_\infty$} in short). With some extra, technical assumption (C1) (which we will discuss later) he obtained some counterpart of the H-B theorem, namely: If $X$ is complete and $\mS$ is a GIFS$_\infty$ satisfying (C1), then there exists a unique $A_\mS \in \K(X)$ such that
\begin{equation}\label{attgifs}
A_\mS = \bigcup_{g \in \mS} \overline{g (A_\mS \times A_\mS \times ...)}.
\end{equation}
We call $A_\mS$ the \emph{fractal} or the \emph{attractor of~GIFS$_\infty$ $\mS$}. The self-similarity of GIFSs$_\infty$' fractals is even more advanced and may not be completely precise (sometimes we can omit taking closures in (\ref{attgifs})). 

To obtain his result, Secelean used some iteration procedure which is~not a~very natural counterpart of the one used by Mihail and Miculescu. Trying to receive such a natural counterpart, Strobin and the~author considered the~waged supremum metric $d_{q}$ on $\ell_\infty(X)$ defined for $q\leq 1$ by 
\begin{equation}
d_{q}((x_n), (y_n)) := \sup_{n\in\N} q^{n-1} d(x_n, y_n), \ \ \ (x_n), (y_n) \in \ell_\infty(X).
\end{equation}
It appears that under some more rigorous conditions, attractors of GIFSs$_\infty$' have better properties (see~\cite{Maslanka2018}).
%if for some $q<1$, $g_s: (\ell_\infty(X), d_q) \to X$ are contractions, then stronger results can be obtained (see \cite{Maslanka2018}) -- in particular we do not need closure in (\ref{attgifs}) and attractor of such GIFS$_\infty$ is a~limit of sequence of attractors of GIFSs (which is not always the case for attractors in Secelean's setting). Therefore we have a method of approximating image of fractals of such GIFSs$_\infty$ by images of particular GIFSs' fractals (which images can be obtained using algorithms established in \cite{JMS}).

In 2003 Balka and M\'ath\'e (\cite{Balka2013}, \cite{Balka2013a}) with the use of \emph{generalized Hausdorff measure} proved that in uncountable polish spaces, the typical compact set is not an attractor of any weak IFS. Also, using different techniques D'Aniello and Steele (\cite{DAniello2015}) proved the same claim for Euclidean spaces. It is also worth underlying that in Hilbert spaces a typical compact set is not even an attractor of any GIFS (see \cite[Theorem 7.1]{Maslanka2019}). We will modify slightly a construction of Balka and M\'ath\'e to prove that the typical (in~the Baire category sense) compact set can be perceived as \emph{self-similar in the sense of Secelean} (i.e. it satisfies (\ref{attgifs}) for some GIFS$_\infty$) and that in Euclidean spaces it is the attractor of some GIFS$_\infty$. 

It is worth noting that the topological properties of fractals are investigated from many points of view. In particular there appears a notion of a~\emph{topological IFS fractal} (see \cite{Kameyama2000}, \cite{Mihail2012}; in \cite{Kameyama2000} it is called a self similar set), i.e. a compact Hausdorff space $X$ such that for some finite family $\F$ of continuous selfmaps of $X$, $X=\bigcup_{f\in\F}f(X)$ and for every sequence $(f_k)\subset \F$, the set 
$$
\bigcap_{k\in\N}f_{1}\circ...\circ f_{k}(X)
$$
(called sometimes a \emph{fibre}), is singleton. As was proved in \cite{Banakh2015} and \cite{Miculescu2015}, $X$~is a~topological IFS fractal iff $X$ is homeomorphic to the attractor of some weak IFS (in particular, it is metrizable). 

%Recently, some efforts were made to detect sets which are IFSs' fractals or topological fractals. In particular if $X$ is a $0$-dimensional compact metrizable space then
%\begin{enumerate}
%\item if $X$ is countable with the limit Cantor-Bendixon height then it is not a topological fractal (Nowak,~\cite{Nowak2013}). 
%\item $X$ can be embedded on the real line as the attractor of some GIFS consisting of two maps (Strobin and the author, \cite{Maslanka2019}).
%\item If $X$ is uncountable then it can be embedded into the real line as attractor of a~Banach IFS that consists of two maps (Banakh, Nowak and Strobin \cite{BNS2}).
%\item A bit earlier in \cite{DAniello2015} D'Aniello and Steele proved analogous result, but the IFSs they constructed consist of more than two maps.
%\end{enumerate} 

In the next section we recall the frameworks of generalized IFSs -- GIFSs and GIFSs$_\infty$. In particular we explain the (C1) condition. We recall counterparts of H-B theorem obtained by the mentioned authors. 
Then in Section 3 we introduce some basic notions and definitions. 
Section 4 is devoted to introducing a slight modification of Balka and M\'ath\'e construction of so called \emph{balanced set}. We also gather results obtained by them. 
Then, in Section 5,  we prove the main theorem which states that in the Polish spaces a typical compact set is self-similar in the sense of Secelean. Also we prove that in~Euclidean spaces it is GIFS$_\infty$' fractal. 
Finally, in the Appendix we correct a proof given by Balka and M\'ath\'e in \cite{Balka2013a} (the theorem, see Theorem~\ref{twzbal}, states that in Polish spaces a typical compact set is either finite or it is a union of~balanced set and finite set). 

\section{Generalizations of notion of iterated function systems}\label{section2}
\subsection{Generalized iterated function systems of finite orders}
Let $m\in\N$ and $X^m$ be the Cartesian product of $m$ copies of $X$, endowed with the maximum metric. A finite family $\G=\{g_1,...,g_n\}$ of contractions $g_i:X^m\to X$ is called a \emph{generalized iterated function systems of order~$m$} (GIFS in short). 
%A GIFS $\G=\{g_1,...,g_n\}$ induces the map $\G:\K(X)^m\to \K(X)$ given by
%$$\G(K_0,...,K_{m-1}):=g_1(K_0\times...\times K_{m-1})\cup...\cup g_n(K_0\times...\times K_{m-1}).$$
Miculescu and Mihail in \cite{Mihail2008} and \cite{Mihail2010} (also \cite{Mihail2008b} for the case of~compact~$X$) proved the following version of the H-B theorem:
\begin{theorem}\label{gifs}
Let $X$ be a complete metric space, $m\in\N$ and $\G=\{g_1,...,g_n\}$ be a GIFS of order $m$. Then there is a unique set $A_\G\in\K(X)$ such that
\begin{equation}\label{ffinal1}
A_\G=\bigcup_{i=1}^n g_i(A_\G\times...\times A_\G).
%=f_1(A_\F\times...\times A_\F)\cup...\cup f_n(A_\F\times...\times A_\F)
\end{equation}
%Moreover, for every $K_0,...,K_{m-1}\in \K(X)$, the sequence $(K_k)$ defined by 
%\begin{equation}\label{ffinal2}
%	K_{k+m}:=\G(K_{k},...,K_{k+m-1})
%\end{equation}
%converges to $A_\G$ with respect to the Hausdorff-Pompeiu metric on $\K(X)$.
\end{theorem}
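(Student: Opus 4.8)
The plan is to mimic the classical Banach–Hutchinson argument, but with the contraction operating on $\K(X)$ via the generalized Hutchinson operator. First I would define $F:\K(X)^m\to\K(X)$ by
$$F(K_1,\dots,K_m):=\bigcup_{i=1}^n g_i(K_1\times\dots\times K_m),$$
observing that this is well defined: each $g_i$ is continuous (being Lipschitz), hence maps the compact set $K_1\times\dots\times K_m$ to a compact set, and a finite union of compact sets is compact and nonempty. I would then show that $F$ is a generalized contraction on $\K(X)$ in the following sense: there is $s<1$ (namely $s:=\max_i\on{Lip}(g_i)$, with the product $X^m$ carrying the maximum metric) such that
$$H\big(F(K_1,\dots,K_m),F(D_1,\dots,D_m)\big)\le s\max_{1\le j\le m}H(K_j,D_j).$$

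The key estimate is the Lipschitz bound for a single $g_i$ at the level of Hausdorff distance: for compact sets $K_j,D_j$ one has $H\big(g_i(K_1\times\dots\times K_m),\,g_i(D_1\times\dots\times D_m)\big)\le \on{Lip}(g_i)\cdot H^{\max}(K_1\times\dots\times K_m,\,D_1\times\dots\times D_m)$, and the product Hausdorff distance in the maximum metric is exactly $\max_j H(K_j,D_j)$. Then I would use the standard fact that $H\big(\bigcup_i A_i,\bigcup_i B_i\big)\le\max_i H(A_i,B_i)$ to pass from the individual $g_i$ to their union, yielding the displayed inequality. This reduces the problem to a fixed-point statement for a map $F:Y^m\to Y$ on the complete metric space $Y:=\K(X)$ satisfying $H(F(\mathbf{y}),F(\mathbf{y}'))\le s\max_j d_Y(y_j,y_j')$.

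Finally I would invoke (or, if the paper prefers self-containedness, prove in two lines) the elementary extension of the Banach fixed point theorem to such ``multivariable contractions'': the iteration $A_0\in\K(X)$ arbitrary, $A_{k+1}:=F(A_k,\dots,A_k)$, is Cauchy because $H(A_{k+1},A_k)\le s\,H(A_k,A_{k-1})\le s^k H(A_1,A_0)$, hence converges (completeness of $\K(X)$, which follows from completeness of $X$ as recalled in the introduction) to some $A_\G$; continuity of $F$ gives $A_\G=F(A_\G,\dots,A_\G)$, which is exactly~\eqref{ffinal1}. Uniqueness follows since any two solutions $A,A'$ satisfy $H(A,A')=H(F(A,\dots,A),F(A',\dots,A'))\le s\,H(A,A')$, forcing $H(A,A')=0$.

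I do not anticipate a serious obstacle here; the only point requiring a little care is the Hausdorff-distance Lipschitz estimate for $g_i$ on the product with the maximum metric — in particular checking that a point of $g_i(K_1\times\dots\times K_m)$ can be approximated to within $\on{Lip}(g_i)\max_j H(K_j,D_j)$ by a point of $g_i(D_1\times\dots\times D_m)$ and symmetrically — but this is a routine unwinding of the definitions of $H$ and of the maximum metric on $X^m$.
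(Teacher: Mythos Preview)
Your argument is correct and is essentially the standard Mihail--Miculescu proof. Note, however, that the paper does not supply its own proof of this theorem: it merely states the result and attributes it to \cite{Mihail2008}, \cite{Mihail2010} (and \cite{Mihail2008b} for compact $X$), so there is nothing in the paper to compare against beyond the citation. Your outline faithfully reconstructs the approach of those references --- define the Hutchinson operator $F:\K(X)^m\to\K(X)$, verify the Lipschitz estimate in the Hausdorff metric via the maximum metric on the product, and apply the multivariable Banach fixed point theorem --- so it is entirely appropriate here.
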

%Notice that iteration procedure (\ref{ffinal2}) involves iterates from $m$ steps back. Moreover conditions (\ref{ffinal1}) and (\ref{ffinal2}) are natural counterparts of the ones in the classical Hutchinson-Barnsley theorem.  
\noindent The set $A_\G$ is called the \emph{fractal} or the \emph{attractor} of GIFS $\G$.

This result was extended by Strobin and Swaczyna in \cite{Strobin2013} to mappings which satisfy weaker contractive conditions. The results from articles \cite[Example 4.3]{Mihail2010} and \cite{Strobin2015} (a construction of a Cantor-type GIFS' fractal which is not a fractal of any IFS) point out that class of GIFSs' fractals is essentially wider than class of IFSs' fractals. Moreover results from \cite{Maslanka2019} combined with~\cite{Nowak2013} shows that there are some GIFSs' fractals which are not fractals of any weak IFS. 

\subsection{Generalized iterated function systems defined on the $\ell_\infty$-sum -- Secelean's approach}
Secelean in \cite{Secelean2014} considered mappings defined on $\ell_\infty$-sum of a space $X$ (mainly with supremum metric) with values in~$X$. We will restrict our attention to contractions (although Secelean dealt with more general contractive conditions).  %Also in \cite{Maslanka2019} (2017) the author and Strobin considered $\ell_\infty(X)$ with other metrics. \\
Given a metric space $(X,d)$, let $\ell_\infty(X)$ be the $\ell_\infty$-sum of $X$, i.e.,
\begin{equation*}
\ell_\infty(X):=\{(x_n)\subset X:(x_n)\;\mbox{is bounded}\}.
\end{equation*}
We endow $\ell_\infty(X)$ with supremum metric $d_1$. 

\begin{remark}\emph{
Clearly, if $X$ is bounded then $\ell_\infty(X)$ is just the Cartesian product. 
}
\end{remark}

Following \cite{Secelean2014}, we say that $f:\ell_\infty(X) \to X$ satisfies the condition \emph{(C1)} if
\begin{center}
$(C1)\;\;\;$ for every $(K_k)\in\ell_\infty(\K(X))$, the closure of the image of the product $\overline{f(\prod_{k=1}^\infty K_k)}\in\K(X)$,
\end{center}
and condition \emph{(C2)}, if
\begin{center}
$(C2)\;\;\;$ for every $(K_k)\in\ell_\infty(\K(X))$, the image of the product ${f(\prod_{k=1}^\infty K_k)}\in\K(X).\;\;\;\;\;\;\;\;\;\;\;\;\;\;\;\;\;\;\;\;\;\;\;$
\end{center}

\begin{remark}\emph{
Observe that since $(K_k) \in \ell_\infty(\K(X))$ the sum $\bigcup_{k=1}^\infty K_k \subset X$ is bounded and hence $f\left(\prod_{k=1}^\infty K_k\right)$ is well defined. Notice also that the product $\prod_{k=1}^\infty K_k$ may not be compact and hence, even if $f$ is continuous, $\overline{f\left(\prod_{k=1}^\infty K_k\right)}$ may not be compact too. 
}
\end{remark}

Eventually, if $\mS=\{f_1,...,f_n\}$ is a family of contractions $f_i: \ell_\infty(X) \to X$ which satisfy (at least) (C1)~condition, then we say that $\mS$ is a \emph{generalized iterated function system of infinite order} (\emph{GIFS$_\infty$} in short). 

\begin{remark}\label{C1eucl}\emph{
Notice that in the case when $X$ is an Euclidean space, any GIFS$_\infty$ fulfills (C1). Let $f: \ell_\infty(\R^n) \to \R^n$ be a contraction. If~\mbox{$(K_k) \in \ell_\infty(\K(\R^n))$} then $\bigcup_{k=1}^\infty K_k \subset \R^n$ is bounded and for any $x, y \in \prod_{k=1}^\infty K_k$ we have
$$d(f(x), f(y)) \leq \on{Lip}(f) d_1(x,y) \leq \on{Lip}(f) \on{diam}\left(\bigcup_{k=1}^\infty K_k\right).$$
Hence $f\left(K_1 \times K_2 \times ... \right)$ is bounded in~$\R^n$ and $\overline{f\left(K_1 \times K_2 \times ...\right)} \in \K(\R^n)$. \\
}
\end{remark}

Secelean obtained the following counterpart of H-B Theorem:
\begin{theorem}\label{se2}
Let $X$ be a complete metric space and $\mS=\{f_1,...,f_n\}$ be a~GIFS$_\infty$. Then there is a~unique $A_\mS\in\K(X)$ such that
$$
A_\mS = \bigcup_{i=1}^n \overline{f_i\left(\prod_{k=1}^\infty A_\mS\right)}.
$$
%Moreover, for every $(K_k)\in\ell_\infty(\K(X))$, the sequence $(Y_k)$ defined by 
%\begin{equation}\label{abc21}
%Y_k:=\F\left(\tilde{\F}^{(k)}(K_1),\tilde{\F}^{(k)}(K_2),...\right), \ \ \ k\in\N,
%\end{equation}
%where $\tilde{\F}(K):=\F(K,K,K,...), K\in \K(X),$ converges to $A_\F$ with respect to the Hausdorff-Pompeiu metric. 
%More precisely, for every $k\in\N^*$,
%$$
%H^d(A_\F, Y_k)\leq \frac{L(\F)^{k+1}}{1-L(\F)}\sup\{H(\tilde{\F}(K_i),K_i):i\in\N^*\}
%$$
%where $L(\F):=\max\{\on{Lip}(f_i):i=1,...,n\}<1$.
\end{theorem}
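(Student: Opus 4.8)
The plan is to apply the Banach contraction principle on the complete metric space $(\K(X),H)$. First I would introduce the \emph{Hutchinson-type operator} $F:\K(X)\to\K(X)$ attached to $\mS$ by
$$F(K):=\bigcup_{i=1}^n\overline{f_i\left(\prod_{k=1}^\infty K\right)},$$
and check it is well-defined: if $K\in\K(X)$ then $K$ is bounded, so the constant sequence $(K,K,\dots)$ lies in $\ell_\infty(\K(X))$, whence condition (C1) guarantees $\overline{f_i(\prod_{k=1}^\infty K)}\in\K(X)$ for each $i$; since a finite union of compact sets is compact, $F(K)\in\K(X)$.

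The core of the argument is a per-map estimate: for a single contraction $f:\ell_\infty(X)\to X$ satisfying (C1) and any $K,D\in\K(X)$,
$$H\left(\overline{f\left(\prod_{k=1}^\infty K\right)},\,\overline{f\left(\prod_{k=1}^\infty D\right)}\right)\le\on{Lip}(f)\,H(K,D).$$
To obtain it I would first bound the Hausdorff-Pompeiu functional on bounded subsets of $(\ell_\infty(X),d_1)$ between the two infinite products: given $x=(x_k)\in\prod_{k=1}^\infty K$, compactness of $D$ lets me choose for each $k$ a point $y_k\in D$ with $d(x_k,y_k)=\on{dist}(x_k,D)\le H(K,D)$, so $y=(y_k)\in\prod_{k=1}^\infty D$ and $d_1(x,y)=\sup_k d(x_k,y_k)\le H(K,D)$; arguing symmetrically gives $H^{d_1}(\prod_{k=1}^\infty K,\prod_{k=1}^\infty D)\le H(K,D)$, a finite quantity since both products are bounded, even though they need not be compact. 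Then I would use that an $L$-Lipschitz map increases the Hausdorff-Pompeiu distance by a factor of at most $L$, and that passing to closures leaves this distance unchanged (because $x\mapsto\on{dist}(x,B)$ is $1$-Lipschitz), which together yield the displayed estimate.

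Combining this with the elementary inequality $H\big(\bigcup_{i=1}^n A_i,\bigcup_{i=1}^n B_i\big)\le\max_{1\le i\le n}H(A_i,B_i)$ gives
$$H(F(K),F(D))\le\max_{1\le i\le n}\on{Lip}(f_i)\,H(K,D),$$
and since $c:=\max_i\on{Lip}(f_i)<1$, the operator $F$ is a contraction on $(\K(X),H)$. As $X$ is complete, so is $(\K(X),H)$, and Banach's fixed point theorem supplies a unique $A_\mS\in\K(X)$ with $F(A_\mS)=A_\mS$, which is exactly the asserted identity together with its uniqueness. The step I expect to demand the most care is the bound on $H^{d_1}(\prod_{k=1}^\infty K,\prod_{k=1}^\infty D)$ and the bookkeeping around the closures, since the infinite products are typically non-compact: one must work with the Hausdorff-Pompeiu functional on merely bounded subsets of $\ell_\infty(X)$ and invoke (C1) precisely where compactness of the output is needed; the remaining steps are routine.
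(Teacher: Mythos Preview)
Your argument is correct: defining the Hutchinson-type operator $F$ on $\K(X)$, bounding $H^{d_1}\big(\prod_k K,\prod_k D\big)\le H(K,D)$ coordinatewise, pushing this through each $f_i$ and the closure, and then invoking Banach's theorem on the complete space $(\K(X),H)$ all work as you describe. The only point requiring care, as you yourself flag, is that the infinite products are merely bounded (not compact) subsets of $\ell_\infty(X)$, so the Hausdorff--Pompeiu expression there is used as a functional on bounded sets rather than as a metric; your handling of this is sound, and (C1) is invoked exactly where needed.

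As for comparison with the paper: the paper does not supply a proof of this theorem at all. Theorem~\ref{se2} is stated as Secelean's result and attributed to \cite{Secelean2014}; the surrounding text even remarks that ``Secelean used some iteration procedure which is not a very natural counterpart of the one used by Mihail and Miculescu'', and that a more natural iteration (via the waged metrics $d_q$) was pursued in \cite{Maslanka2018}. So your direct fixed-point argument on $\K(X)$ is, if anything, more streamlined than what the paper hints Secelean originally did: you bypass any iterated-sequence construction and get existence and uniqueness in one stroke, at the cost of needing the coordinatewise estimate on the non-compact products. There is nothing to correct.
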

The set $A_\mS$ is called the \emph{fractal} or the \emph{attractor} of GIFS$_\infty$ $\mS$.  

%\begin{remark}\emph{
%Secelean in his paper considered also metric $d_{1/2}$ and used Theorem~\ref{se2} to obtain fractal. Some more effort concerning metrics $d_q, q<1$, were done by the author and Strobin to introduce other than Secelean's iteration procedure for obtaining GIFS$_\infty$' fractal (see \cite{Maslanka2018}). \\
%Let us note that concerning $(\ell_\infty(X), d_q)$ with $q<1$ has many advantages. One of these is that if $X$ is bounded then the topology on $\Xinf$ ($=\prod_{k=1}^\infty X$) induced by $d_{q}$ is exactly the Tychonoff product topology. Hence the product $\prod_{k=1}^\infty K_k$ of sets $K_k$ such that $(K_k) \in \ell_\infty(\K(X))$ is compact and therefore (C2) condition is fulfilled whenever $f: (\ell_\infty(X), d_q) \to X$ is continuous. Some more advantages considering GIFSs$_\infty$' fractals properties are shortly described in the Introduction. Others, like its connection with appropriate code spaces or division of a GIFS$_\infty$' fractal, can be found in \cite{Maslanka2018}.
%}
%\end{remark}

\section{Basic definitions}
Let $(X,d)$ be a metric space and $A, B \subset X$. By $\K(A)$ we denote the subspace of $\K(X)$ which consists of nonempty, compact subsets of $A$. By~$\on{Int} A$ and $\overline{A}$ we denote accordingly the \emph{interior} and the \emph{closure} of~$A$. By~$\on{diam}(A)$ we denote the \emph{diameter of $A$} (we set $\on{diam}(\emptyset) = 0$). For $r>0$ we define \emph{$r$-neighbourhood \mbox{of $A$}} by $B(A, r) := \left\{x \in X: \exists_{a \in A}\ d(a,x) < r \right\}$. The \emph{distance between $A$ and $B$} is defined by \mbox{$\on{dist}(A,B) := \inf\{d(a,b): a \in A, b\in B\}$}. We~say that $A$ is \emph{nowhere dense} if $\on{Int} \overline{A} = \emptyset$ and that it is \emph{meager} if it is a countable union of nowhere dense sets. A set is \emph{co-meager} if its complement is meager. Note that a set is co-meager iff it contains a dense $G_\delta$ set (i.e.~a~dense set which is a countable intersection of open sets). We call $A$ \emph{perfect} if it is closed and does not contain isolated points. We say that $X$ is a~\emph{Polish metric space} if it is separable and complete. Cantor-Bendixson Theorem (\cite[Theorem 6.4]{Kechris1995}) states that a Polish metric space $X$ is a disjoint union of perfect set $P$ and countable open set $U$. When $X$ is complete, we~say that a~\emph{typical element of $X$} has property $\mathcal{P}$ if the set $\{x \in X: x \textrm{ has property } \mathcal{P}\}$ is co-meager. 

Let $\delta >0$ and $A \subset X$. We say that a family $\{U_i\}_{i \in I}$ of subsets of $X$ is a~\emph{$\delta$-cover of $A$} if $I$ is countable, $A \subset \bigcup_{i\in I} U_i$ and $\on{diam}(U_i) \leq \delta$ for $i\in I$.
We~call a function $h : [0,\infty) \to [0,\infty)$ a \emph{gauge function} if it is nondecreasing, right-continuous and $h(x) = 0$ iff $x=0$. If $h$ is a gauge function then we set 
$$\Hd^h_\delta(A) := \inf \left\{\sum_{i\in I} h\left(\on{diam}(U_i)\right) : \{U_i\}_{i \in I} \textrm{ is a $\delta$-cover of $A$} \right\}.$$
Then we put
$$\Hd^h(A) := \lim_{\delta \to 0} \Hd^h_\delta(A).$$
$\Hd^h(A)$ is called \emph{$h$-Hausdorff measure of $A$}. If $0<\Hd^h(A)<\infty$ then we say that $A$ is \emph{$h$-visible}. 
%Powiemy, że zbiór $F \subset X$ jest \emph{$\Hd$-widzialny}, jeżeli istnieje funkcja mierząca $h$, dla~której~$F$ jest $h$-widzialny.

Let us denote by $\N^{\leq n}$ a family of finite sequences of natural numbers of~length at most $n$, by $\N^{<\omega}$ a family of all finite sequences of natural numbers. Also by $2\N+1$ denote the set of odd numbers.

\section{Typical compact set in polish spaces}
In \cite{Balka2013}, \cite{Balka2013a} Balka and M\'ath\'e introduced a construction of ''balanced sets''. We will present this construction with a slight modification which will be crucial for concluding about self-similarity of typical compact sets. During this section we will comment if any significant change in the original proofs should be concerned. We begin with a definition of a $q$-balanced set.

\begin{definition}\emph{
Let $(a_n) \subset \N$. For any $n\in\N$, define 
$$\I_n := \prod_{i=1}^n \{1, ..., a_i\}$$
(i.e. $\I_n$ is a set of sequences of length $n$ with $i$-th element from $\{1,...,a_i\}$) and put 
$$\I := \bigcup_{n=1}^\infty \I_n.$$
We say that $\Phi: 2\N+1 \to \I$ is an \emph{indexing function according to \mbox{sequence $(a_n)$}} if $\Phi$ is surjective and $\Phi(n) \in \bigcup_{i=1}^n \I_i$ for any odd $n$.
}
\end{definition}
%\subsection{Balanced sets}

\begin{definition}\label{defzbalansowany}\emph{
Let $X$ be a complete metric space and $q\geq 2$. We will say that a compact set \mbox{$K\subset X$} is \emph{$q$-balanced} if
\begin{equation}\label{cpostac}
K = \bigcap_{n\in\N} \left(\bigcup_{i_1=1}^{a_1} \ldots \bigcup_{i_n=1}^{a_n} C_{i_1, ..., i_n} \right),
\end{equation}
where \mbox{$(a_n) \subset \N$} is such that: 
\begin{itemize}
\item[(i)] $a_1 \geq 2$ and $a_{n+1} \geq n a_1 ... a_n$,
\end{itemize}
$C_{i_1, ..., i_n}$ are nonempty, closed and satisfy
\begin{itemize}
\item[(ii)] $C_{i_1, ... ,i_n, i_{n+1}} \subset C_{i_1, ..., i_n}$ for any $(i_1, ...,i_n) \in \I_n$ and $i_{n+1} \in \{1, ..., a_{n+1}\}$,
\end{itemize}
and there exist a sequence $(b_n) \subset [0,\infty)$ and an indexing function \mbox{$\Phi: 2\N+1 \to \I$} according to \mbox{sequence $(a_n)$} such that for any $n\in\N$ and \mbox{$(i_1, ..., i_n), (j_1, ..., j_n) \in \I_n$}:
\begin{itemize}
\item[(iii)] $\on{diam} \left(C_{i_1, ..., i_n}\right) \leq b_n$;
\item[(iv)] $\on{dist}(C_{i_1, ..., i_n}, C_{j_1, ..., j_n}) > q b_n$ whenever $(i_1, ..., i_n) \neq (j_1, ..., j_n)$;
\item[(v)] If $n$ is odd and $C_{i_1, ..., i_n} \subset C_{\Phi(n)}$, and $C_{j_1, ..., j_n} \not\subset C_{\Phi(n)}$ then for any different $s,t\in\{1, ..., a_{n+1}\}$
$$\on{dist}(C_{i_1, ..., i_n, s}, C_{i_1, ..., i_n, t}) > \on{diam} \left( \bigcup_{j_{n+1} = 1}^{a_{n+1}} C_{j_1, ..., j_n, j_{n+1}} \right).$$
\end{itemize}
}
\end{definition}

\begin{remark}\emph{
The mentioned modification bases on concerning $q \geq 2$ factor in Definition~\ref{defzbalansowany}(iv). Since this change does not affect the idea of proofs from \mbox{\cite{Balka2013}, \cite{Balka2013a}}, we will just comment where they can be precisely found. 
}
\end{remark}

\begin{remark}\emph{
Notice that in countable complete metric space there is no $q$-balanced set (since any such set has $\mathfrak{c}$ cardinality).
}
\end{remark}

\begin{theorem}\label{istnienie}
Let $X$ be an uncountable Polish metric space and $q\geq 2$. There exists $q$-balanced \mbox{set $K\subset X$}.
\end{theorem}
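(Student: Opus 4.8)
The plan is to build the sets $C_{i_1,\dots,i_n}$ recursively on $n$, choosing the parameters $(a_n)$, $(b_n)$ and the indexing function $\Phi$ as we go. Since $X$ is an uncountable Polish space, the Cantor–Bendixson theorem gives a nonempty perfect subset $P\subset X$; all constructions will take place inside $P$, which is itself a complete metric space in which every nonempty open set is uncountable and, in particular, contains infinitely many points at arbitrarily small mutual distances. First I would fix a rapidly decreasing sequence $(b_n)\subset(0,\infty)$ with $b_{n+1}$ small compared to $b_n$ (concretely $b_{n+1}\le b_n/(10q)$ will be more than enough), and choose $(a_n)\subset\N$ satisfying (i), i.e. $a_1\ge 2$ and $a_{n+1}\ge n a_1\cdots a_n$; there is plenty of freedom here, so I can also demand $a_{n+1}$ to be as large as any later step requires. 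Because $\I:=\bigcup_n\I_n$ is countable, I can fix once and for all a surjection $\Phi:2\N+1\to\I$ with $\Phi(n)\in\bigcup_{i\le n}\I_i$ for odd $n$ — the latter is arrangeable by a standard bookkeeping argument (enumerate $\I$ so that the $k$-th element lies in $\I_j$ for some $j\le$ its position, which is automatic if we list $\I_1$, then $\I_2$, etc., and then read this off at odd stages, padding if necessary).

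The core is the recursive step. Suppose $C_{i_1,\dots,i_n}$ has been defined as a nonempty subset of $P$ with $\operatorname{diam}(C_{i_1,\dots,i_n})\le b_n$ and with the separation (iv) among the level-$n$ sets already in force. Pick any point $x$ in $C_{i_1,\dots,i_n}$; since $P$ is perfect, some neighbourhood of $x$ inside $C_{i_1,\dots,i_n}$ contains infinitely many points, so I can select $a_{n+1}$ points and tiny closed balls $C_{i_1,\dots,i_n,s}$ ($s=1,\dots,a_{n+1}$) around them, of diameter at most $b_{n+1}$, pairwise contained in $C_{i_1,\dots,i_n}$ (so (ii) holds) and pairwise separated by more than $qb_{n+1}$ (so (iv) holds at level $n+1$, using also that the separation between two sets with different level-$n$ ancestor is inherited from (iv) at level $n$ once $b_{n+1}$ is small enough). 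Condition (iii) at level $n+1$ is then immediate. Condition (v) is handled only at odd $n$: the sets $C_{i_1,\dots,i_n}$ split into those contained in $C_{\Phi(n)}$ and those not; since there are finitely many of each, after choosing the children of all the level-$n$ sets I know the number $D:=\operatorname{diam}\bigl(\bigcup_{j_{n+1}} C_{j_1,\dots,j_n,j_{n+1}}\bigr)$, and by first selecting, inside each $C_{i_1,\dots,i_n}$ with $C_{i_1,\dots,i_n}\subset C_{\Phi(n)}$, its $a_{n+1}$ children to be mutually separated by more than $D$ (possible because $P$ is perfect, so $C_{i_1,\dots,i_n}$ contains subsets of arbitrarily large finite "spread" relative to its own — wait, here one must be careful), we force (v). This is the delicate point and I address it below; all other conditions are soft consequences of perfectness and of the freedom in choosing $b_{n+1}$ small and $a_{n+1}$ large.

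Once the $C_{i_1,\dots,i_n}$ are constructed, I set $K:=\bigcap_{n}\bigl(\bigcup_{(i_1,\dots,i_n)\in\I_n} C_{i_1,\dots,i_n}\bigr)$. This is an intersection of a decreasing (by (ii)) sequence of nonempty closed sets; each term is compact because it is a finite union of sets each of diameter $\le b_n$ sitting inside $P$ — actually to guarantee compactness I would take each $C_{i_1,\dots,i_n}$ to be a closed ball of $X$ intersected with a small compact piece, or simply note that by shrinking $b_n\to 0$ the nested intersection along any branch $(i_1,i_2,\dots)$ is a single point (completeness of $X$), so $K$ is the continuous image of the compact Cantor-type space $\prod_n\{1,\dots,a_n\}$ and hence compact. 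By construction $K$ satisfies (\ref{cpostac}) with the chosen $(a_n)$, the closed sets $C_{i_1,\dots,i_n}$, the sequence $(b_n)$ and the indexing function $\Phi$, so $K$ is $q$-balanced.

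The main obstacle is condition (v), which asks that at odd levels the children of any set contained in $C_{\Phi(n)}$ be spread farther apart than the \emph{entire} level-$(n+1)$ union is wide. The resolution is an ordering issue: one must choose the children in the right order across the different level-$n$ sets. Concretely, at an odd step $n$, first put down, inside the (finitely many) sets $C_{i_1,\dots,i_n}$ that are \emph{not} contained in $C_{\Phi(n)}$, all their children as very small, tightly clustered balls; also shrink the children of the contained sets to be clustered near their respective parents. This fixes a provisional diameter $D_0$ of the level-$(n+1)$ union. Then go back to the sets $C_{i_1,\dots,i_n}\subset C_{\Phi(n)}$: since $P$ is perfect, each such $C_{i_1,\dots,i_n}$ has positive diameter (we may assume inductively $\operatorname{diam}(C_{i_1,\dots,i_n})>0$), but $D_0$ could a priori exceed that diameter — which is why instead one enlarges $D$ by only a controlled amount. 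The clean way is: inside each $C_{i_1,\dots,i_n}\subset C_{\Phi(n)}$ choose the $a_{n+1}$ children mutually separated by more than the current diameter of the level-$(n+1)$ union, updating the union after each parent; since this is a finite process and each update multiplies the union's diameter by at most a fixed factor, after finitely many steps we still have a finite diameter, and by reverse induction every required separation in (v) is guaranteed. The only genuine input is perfectness of $P$, giving us, inside any $C_{i_1,\dots,i_n}\subset C_{\Phi(n)}$, finitely many points as widely separated as we like \emph{relative to our target}, which holds as long as $C_{i_1,\dots,i_n}$ has nonempty interior in $P$ or at least positive diameter — an invariant we maintain by never shrinking the "contained" sets at earlier steps below a level we can afford. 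This careful sequencing of choices, rather than any hard estimate, is where the proof's content lies; the cited construction of Balka and Máthé (\cite{Balka2013}, \cite{Balka2013a}) does exactly this, and our only modification is to insist on the factor $q$ in (iv), which costs nothing since we are free to make each $b_{n+1}$ smaller and each separation larger.
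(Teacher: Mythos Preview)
Your overall plan --- pass to a perfect kernel $P$ via Cantor--Bendixson and build the tree of sets $C_{i_1,\dots,i_n}$ inductively --- is exactly the Balka--M\'ath\'e construction that the paper simply cites. However, your write-up has two genuine gaps.

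First, you fix $(b_n)$ in advance as a geometric-type sequence $b_{n+1}\le b_n/(10q)$. This cannot work in an arbitrary perfect set: at stage $n$ you need $a_{n+1}$ points in $C_{i_1,\dots,i_n}\cap P$ with pairwise distance exceeding $(q+2)b_{n+1}$ (for (iv)), but perfectness gives you infinitely many points, not points at a prescribed scale. In the standard middle-thirds Cantor set, for instance, a piece of diameter $b_n$ contains at most a bounded number of points mutually $b_n/10$ apart, independent of how large you want $a_{n+1}$. The correct order is the reverse: at each stage first choose $a_{n+1}$ distinct points in each level-$n$ cell (just by perfectness), record the minimum gap, and \emph{then} define $b_{n+1}$ smaller than a suitable fraction of that gap.

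Second, you misread condition (v). The right-hand side is $\on{diam}\bigl(\bigcup_{j_{n+1}} C_{j_1,\dots,j_n,j_{n+1}}\bigr)$ for a \emph{single} ``outside'' parent $(j_1,\dots,j_n)$, not the diameter of the entire level-$(n{+}1)$ union. Consequently your proposed fix --- spreading the children of an ``inside'' cell farther apart than the full level-$(n{+}1)$ union, with iterative ``updating'' --- is both unnecessary and impossible (those children sit in a set of diameter $\le b_n$, while the full union has diameter comparable to $b_1$). With (v) read correctly the argument is a one-pass finite procedure, and in the opposite order to yours: first choose the children of every ``inside'' cell $C_{i_1,\dots,i_n}\subset C_{\Phi(n)}$, obtaining some minimum pairwise separation $\delta>0$; then, for each ``outside'' cell, cluster all its children inside a ball of diameter $<\delta$. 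No recursion or diameter-tracking is needed.
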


\begin{proof}
\cite[Theorem 3.5]{Balka2013}
\end{proof}

\begin{theorem}\label{wsiebie}
Let $X$ be an uncountable Polish metric space and let $K$ be a~$q$-balanced set. Then there exists continuous gauge function $h$ such that $K$ is \mbox{$h$-visible} and for any weak contraction $f:K \to X$ 
$$\Hd^h(K\cap f(K)) = 0.$$
%WYSTARCZA ZUPEŁNOŚĆ I NIEPRZELICZALNOŚĆ
\end{theorem}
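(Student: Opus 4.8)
The plan is to construct the gauge function $h$ directly from the data $(a_n)$, $(b_n)$ witnessing that $K$ is $q$-balanced, in such a way that $h$ simultaneously makes $K$ $h$-visible and penalizes any ``large'' piece of $K$ so severely that a weak contraction cannot cover a positive-measure portion of $K$ by its own image. Concretely, I would define $h$ on the relevant scales $b_n$ so that $h(b_n)$ is comparable to $(a_1\cdots a_n)^{-1}$, i.e.\ to the reciprocal of the number of pieces at level $n$; then each level-$n$ cover $\{C_{i_1,\dots,i_n}\}$ contributes roughly $a_1\cdots a_n\cdot h(b_n)\asymp 1$, and together with the separation condition~(iv) (which forces any efficient $\delta$-cover to respect the tree structure once $\delta<qb_n$) this yields $0<\Hd^h(K)<\infty$, so $K$ is $h$-visible. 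Interpolating $h$ between consecutive scales by, say, a power law, and using condition~(i) to control the ratios $b_{n+1}/b_n$ and $a_{n+1}/(a_1\cdots a_n)$, gives a genuine continuous gauge function.

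First I would record the combinatorial estimate: for $\delta$ slightly below $qb_n$, any $\delta$-cover of $K$ can be refined so that each covering set meets the pieces inside a single $C_{i_1,\dots,i_n}$ (condition~(iv)), and a standard mass-distribution / Frostman-type argument with the natural measure $\mu$ on $K$ putting mass $(a_1\cdots a_n)^{-1}$ on each level-$n$ cylinder gives both the upper and lower bounds for $\Hd^h(K)$. Next, for the key inequality $\Hd^h(K\cap f(K))=0$, I would exploit the modification embodied in condition~(v) together with the indexing function $\Phi$. The point of the factor $q\ge 2$ in~(iv) and of~(v) is that a weak contraction $f$ strictly decreases distances, so the image $f(C_{\Phi(n)})$, being a continuous image of a piece of diameter $\le b_{|\Phi(n)|}$, has diameter strictly smaller than $b_{|\Phi(n)|}$ and therefore, by the separation in~(iv), can intersect at most one level-$n$ cylinder of $K$ that lies ``outside'' $C_{\Phi(n)}$ in the sense of~(v) — in fact, by~(v), $f$ cannot spread across two sibling sub-cylinders at level $n+1$. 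Running this over all odd $n$ and using surjectivity of $\Phi$ (every cylinder $C_{\Phi(n)}$ is hit infinitely often as $n$ ranges over odd numbers), I would show that $K\cap f(K)$ is contained, at infinitely many scales $n$, in a union of only a bounded number (independent of $n$, or at least sub-linear in $a_1\cdots a_n$) of level-$n$ cylinders; hence its $h$-measure, which at scale $n$ is at most (number of relevant cylinders)$\cdot h(b_n)\to 0$, must vanish.

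The main obstacle I anticipate is making the covering-efficiency step for $K\cap f(K)$ fully rigorous: one must argue that a near-optimal $\delta$-cover of the set $K\cap f(K)$ cannot do better than following the cylinder structure inherited from $K$, and then count precisely how many level-$n$ cylinders can meet $f(K)$. Here the interplay between the weak (not uniform) contractivity of $f$ and the quantitative gaps guaranteed by~(iv)–(v) is delicate: a weak contraction need not shrink diameters by a fixed factor, so one cannot simply say $\on{diam} f(C_{i_1,\dots,i_n})\le \lambda\, b_n$. Instead I would use that $f$ restricted to the compact set $K$ has the property $d(f(x),f(y))<d(x,y)$ for $x\ne y$, combine it with compactness to get, on each fixed cylinder, a genuine (cylinder-dependent) strict contraction ratio, and feed this into condition~(v), which was designed precisely so that the \emph{strict} inequality suffices. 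Once this counting is in place, summing the resulting bounds along the subsequence of odd $n$ for which $\Phi(n)$ selects a cylinder containing a given point closes the argument, and the choice of $h$ growing like $(a_1\cdots a_n)^{-1}$ makes the tail contributions summably small, giving $\Hd^h(K\cap f(K))=0$.
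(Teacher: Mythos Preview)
The paper does not actually prove this statement: its entire proof is the single citation ``\cite[Theorem 5.1]{Balka2013}''. So there is nothing in the present paper to compare your attempt against; your plan must be measured against the Balka--M\'ath\'e original.

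Your broad outline---take $h$ with $h(b_n)\asymp(a_1\cdots a_n)^{-1}$, verify $h$-visibility via the natural cylinder measure together with the separation~(iv), then count how many level-$(n{+}1)$ cylinders can meet $f(K)$---is exactly the skeleton of that proof. But your account of the key step inverts the roles of ``inside'' and ``outside'' $C_{\Phi(n)}$ in condition~(v). What (v) says is that for every level-$n$ cylinder $C_{i_1,\dots,i_n}\subset C_{\Phi(n)}$, the gaps between its level-$(n{+}1)$ children exceed $\on{diam}\big(\bigcup_{j_{n+1}}C_{j_1,\dots,j_n,j_{n+1}}\big)$ for every level-$n$ cylinder $C_{j_1,\dots,j_n}\not\subset C_{\Phi(n)}$. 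Since $K\cap C_{j_1,\dots,j_n}$ is compact and $f$ is a weak contraction, $\on{diam} f(K\cap C_{j_1,\dots,j_n})$ is strictly smaller than this, so $f(K\cap C_{j_1,\dots,j_n})$ meets at most \emph{one} child of each $C_{i_1,\dots,i_n}\subset C_{\Phi(n)}$. Together with~(i) ($a_{n+1}\ge n\,a_1\cdots a_n$) this shows that at most a $1/n$ fraction of the level-$(n{+}1)$ cylinders inside $C_{\Phi(n)}$ are hit by $f(K\setminus C_{\Phi(n)})$; surjectivity of $\Phi$ then lets one run this over all cylinders. Your version looks instead at $f(C_{\Phi(n)})$ and claims it meets ``at most one level-$n$ cylinder outside $C_{\Phi(n)}$'', which conflates levels $n$ and $|\Phi(n)|$ and, more importantly, is not the inequality encoded in~(v); the conclusion you draw (that $K\cap f(K)$ sits in a bounded or sublinear-in-$a_1\cdots a_n$ number of level-$n$ cylinders globally) is also too strong---the saving happens only \emph{inside} $C_{\Phi(n)}$, cylinder by cylinder. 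With the roles corrected the rest of your plan goes through.
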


\begin{proof}
\cite[Theorem 5.1]{Balka2013}
\end{proof}

\begin{restatable}{theorem}{twzbal}
\label{twzbal}
Let $X$ be a Polish metric space. Typical compact set $K \subset X$ is either finite or it is a~union of finite set and a $q$-balanced set (for any $q \geq 2$). \\
Additionally, if $X$ is perfect then the typical compact set is a $q$-balanced set (for any $q \geq 2$).
\end{restatable}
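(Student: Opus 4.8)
The plan is to show that
\[
\mathcal{T}_q:=\{K\in\K(X): K \text{ finite, or } K=F\cup B,\ F \text{ finite},\ B\ q\text{-balanced}\}
\]
is co-meager in $\K(X)$; since the only $q$-dependent clause, Definition~\ref{defzbalansowany}(iv), merely weakens as $q$ decreases, a $q$-balanced set is $q'$-balanced for every $q'\in[2,q]$, so the $\mathcal{T}_q$ decrease in $q$ and it suffices to treat each integer $q\ge2$ (then $\bigcap_{m\ge2}\mathcal{T}_m$ is co-meager and contained in every $\mathcal{T}_q$). First I would use the Cantor--Bendixson theorem to write $X=P\sqcup U$ with $P$ perfect (closed) and $U$ countable (open), and split the task into: (a) the typical $K$ has $K\cap U$ finite; and (b) the typical $K$ has $K\cap P$ empty, finite, or $q$-balanced. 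Granting both, for a typical $K$ we get $K=(K\cap U)\cup(K\cap P)$ with $K\cap U$ finite and $K\cap P$ (compact, being closed in $K$) finite or $q$-balanced, so $K$ is finite or a union of a finite set and a $q$-balanced set. If moreover $X$ is perfect then $U=\emptyset$, while $\{K\in\K(X):|K|\ge m\}$ is open and dense — in a perfect space one may adjoin to any $K$ arbitrarily many points near a point of $K$ without leaving a prescribed Vietoris neighbourhood — so the typical $K$ is infinite and hence, by (b), equal to $K\cap P$ and thus $q$-balanced, which yields the extra assertion.

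Claim (a) is the ``finite'' case of the theorem for the countable Polish space $U$, and I would prove it by a routine induction on the Cantor--Bendixson rank of $U$ (the essential point being that a compact $K$ with $K\cap U$ infinite must accumulate in a derived set of strictly smaller rank, and that $\{K:K\cap C\neq\emptyset\}$ is nowhere dense away from the isolated points of $X$, which contribute only finite compacta). Claim (b) carries the weight. The natural attempt is to fix once and for all a sequence $(a_n)$ as in Definition~\ref{defzbalansowany}(i), a sequence $(b_n)$ decreasing rapidly to $0$, and an indexing function $\Phi$ according to $(a_n)$ (these exist because $a_1\cdots a_n$ grows fast); to call a finite nested family $\{C_s:|s|\le n\}$ of nonempty closed subsets of $P$ a \emph{depth-$n$ balanced coding} if it satisfies Definition~\ref{defzbalansowany}(ii),(iii),(iv) for levels $\le n$ and (v) at odd levels $\le n$ — the latter being arrangeable since $b_{j+1}\ll b_j$, by placing the odd-level children that lie inside $C_{\Phi(j)}$ far apart and squeezing those outside $C_{\Phi(j)}$ together; and then to consider, for each $n$, the set $\mathcal{H}_n$ of compact $K$ for which $K\cap P$ is finite, or there is a depth-$n$ balanced coding with $K\cap P\subseteq\bigcup_{|s|=n}C_s$ and $K\cap C_s\neq\emptyset$ for every $|s|=n$. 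Perfectness of $P$ makes such codings abundant: near any finite subset of $P$ one can carve out a tree of $a_1\cdots a_n$ arbitrarily small closed subsets of $P$ realizing the prescribed separations, because every nonempty relatively open subset of $P$ contains Cantor sets of arbitrarily small diameter.

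The main obstacle is twofold, and is where I expect the argument to be delicate and the original proof of Balka and M\'ath\'e to require the repair given in the Appendix. First, the sets $\mathcal{H}_n$ are not literally open: arbitrarily close to a one-point set one finds infinite compacta — e.g. convergent sequences clustered at a single scale — admitting no depth-$n$ coding, so the Baire category theorem cannot be applied to the $\mathcal{H}_n$ directly. Second, and more seriously, even if $K$ lies in every $\mathcal{H}_n$, the depth-$n$ codings for different $n$ need not be nested, whereas Definition~\ref{defzbalansowany} demands one infinite family $(C_{i_1,\dots,i_n})$. The robust way around both points would be to recast the construction as a Banach--Mazur game on $\K(X)$: the second player, on each move, uses the abundance of codings just described to pin down one more level of a \emph{coherent} balanced coding inside the shrinking basic open set played by the first player, while keeping the diameters of the plays tending to $0$. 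The content of the argument is to verify that the second player can always respond — in particular that the first player cannot frustrate the growth of the developing Cantor structure without forcing the limit set to be finite, which already lies in $\mathcal{T}_q$ — so that the second player has a winning strategy and $\mathcal{T}_q$ is co-meager. Once a coherent infinite balanced coding $(C_{i_1,\dots,i_n})$ for a given $K$ has been produced, $B:=\bigcap_n\bigcup_{|s|=n}C_s$ is a nested intersection of finite unions of closed subsets of $P$ with diameters tending to $0$, hence a Cantor set contained in $P$ with $B=K\cap P$; thus $K=(K\cap U)\cup B$ exhibits $K\in\mathcal{T}_q$, and the argument closes.
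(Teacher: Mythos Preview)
Your proposal is an outline rather than a proof: the step you yourself flag as carrying the weight---the Banach--Mazur strategy for player~II---is asserted to exist but never constructed. You write ``the content of the argument is to verify that the second player can always respond'', and then stop; that verification \emph{is} the proof. Two places where the strategy must be made explicit: (1) player~I's moves are Vietoris-open sets in $\K(X)$, not in $\K(P)$, so they may force points of $U$ into every element; you must say how player~II accommodates these while still extending the balanced coding inside $P$. (2) Your escape clause ``without forcing the limit set to be finite'' suggests player~II might sometimes abandon the coding and aim for a finite outcome, but a winning strategy is total: at every position you must specify a move guaranteeing the eventual outcome lies in $\mathcal T_q$, and you must argue that the two regimes (building a Cantor set versus collapsing to a finite set) can be interleaved coherently. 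Your Claim~(a) via Cantor--Bendixson rank induction is likewise only gestured at.

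The paper's route is different and avoids games. For perfect $X$ it simply cites the original Balka--M\'ath\'e construction of a dense $G_\delta$ family $\F^*\subset\K(X^*)$ of $q$-balanced sets. For general $X$, the Appendix identifies exactly the obstruction you anticipate---the map $K\mapsto K\cap X^*$ is discontinuous, so one cannot pull $\F^*$ back---and repairs it by building dense open sets in $\K(X)$ directly: writing $\F^*=\bigcap_n\F_n$ with each $\F_n$ dense open in $\K(X^*)$ and letting $S$ be the set of isolated points of $X$, it sets
\[
\F_n':=\bigl\{A\cup B\neq\emptyset:\ A\in(\F_n\cap\K(X^*\setminus\overline S))\cup\{\emptyset\},\ B\in\K(S)\cup\{\emptyset\}\bigr\}.
\]
The key device is to replace $X^*$ by the \emph{open} set $X^*\setminus\overline S$: then a small Hausdorff perturbation of $A\cup B$ still splits as $A'\cup B'$ with $A'\in\K(X^*\setminus\overline S)$ close to $A$ and $B'\subset S$, so openness of $\F_n'$ can be checked by hand, and density follows from density of $\F_n$ in $\K(X^*)$ together with density of $S$ in $\overline S$. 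Your Claim~(a) falls out as a byproduct (elements of $\bigcap_n\F_n'$ meet $\overline S$ only in the finite set $B$), with no transfinite induction needed.
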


\begin{remark}\emph{
In the original proof of this theorem (\cite[Theorem 4.5]{Balka2013a}) there is a flaw. In Section~\ref{appendix} we will first present a counterexample for the incorrect statement in the proof and then we will correct the flaw. 
}
\end{remark}

As a consequence the following holds
\begin{corollary}\label{typweak}
Let $X$ be Polish metric space. Typical compact set $K\subset X$ is either finite, or there exists continuous gauge function $h$ such that $K$ is \mbox{$h$-visible} and for any weak contraction $f:K \to X$, \mbox{$\Hd^h(K\cap f(K)) = 0$}.
\end{corollary}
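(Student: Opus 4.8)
The plan is to obtain the corollary by simply feeding Theorem~\ref{twzbal} into Theorem~\ref{wsiebie}; the only genuine step is the passage from a $q$-balanced set to a set that is merely the union of a $q$-balanced set and a finite set. Fix $q=2$. By Theorem~\ref{twzbal}, the family of compact $K\subset X$ that are either finite or of the form $K=F\cup B$ with $F$ finite and $B$ a $2$-balanced set is co-meager in $\K(X)$, so it suffices to verify the conclusion for each such $K$. If $K$ is finite we are in the first alternative and there is nothing to prove, so assume $K=F\cup B$ with $B$ a $2$-balanced set.

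Apply Theorem~\ref{wsiebie} to $B$: there is a continuous gauge function $h$ with $0<\Hd^h(B)<\infty$ and $\Hd^h(B\cap g(B))=0$ for every weak contraction $g:B\to X$. I would then record the elementary facts that $\Hd^h$ is a monotone, countably subadditive outer measure and that $\Hd^h(E)=0$ for every finite $E\subset X$ (each singleton is covered by itself, a set of diameter $0$, and $h(0)=0$). Monotonicity gives $\Hd^h(K)\geq\Hd^h(B)>0$, while subadditivity gives $\Hd^h(K)\leq\Hd^h(F)+\Hd^h(B)=\Hd^h(B)<\infty$; hence $K$ is $h$-visible.

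Finally, let $f:K\to X$ be a weak contraction. Its restriction $f|_B:B\to X$ is again a weak contraction. A pointwise check shows
$$K\cap f(K)=(F\cup B)\cap\bigl(f(F)\cup f(B)\bigr)\subseteq F\cup f(F)\cup\bigl(B\cap f(B)\bigr),$$
so by subadditivity $\Hd^h(K\cap f(K))\leq\Hd^h\bigl(F\cup f(F)\bigr)+\Hd^h\bigl(B\cap f(B)\bigr)=0$, since $F\cup f(F)$ is finite and $\Hd^h(B\cap f|_B(B))=0$ by Theorem~\ref{wsiebie}. This proves the claim.

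I do not anticipate a real obstacle here: the whole argument is bookkeeping. The only points needing care are that the gauge function must be the one produced for $B$ (not for $K$), that a restriction of a weak contraction is still a weak contraction, and that adjoining or mapping a finite set cannot change $\Hd^h$ from $0$ or affect $h$-visibility.
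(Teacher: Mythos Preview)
Your proof is correct and is precisely the argument the paper leaves implicit: the corollary is stated there without proof, and the details you supply---applying Theorem~\ref{wsiebie} to the $q$-balanced part $B$, transferring $h$-visibility to $K=F\cup B$ via monotonicity and subadditivity of $\Hd^h$, restricting $f$ to $B$, and bounding $K\cap f(K)$ by the finite set $F\cup f(F)$ together with $B\cap f|_B(B)$---are exactly the intended ones.
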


\section{Balanced sets as attractors of GIFSs$_\infty$}
We will now study mentioned notions of self-similarity of typical compact sets in Polish metric spaces. Directly from the results of Balka and M\'ath\'e we get
\begin{theorem}\label{zbalIFS}
Let $X$ be Polish metric space. Then a typical compact set is~either finite or it is not a fractal of any weak IFS.
\end{theorem}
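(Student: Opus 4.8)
The plan is to derive Theorem~\ref{zbalIFS} as a formal consequence of Theorem~\ref{twzbal} together with Corollary~\ref{typweak} (equivalently Theorem~\ref{wsiebie}). First I would invoke Theorem~\ref{twzbal} to split the generic behaviour into two cases: the typical compact set $K$ is either finite, or it contains a $q$-balanced subset (indeed, it is a union of a finite set and a $q$-balanced set, and when $X$ is perfect it is itself $q$-balanced). So it suffices to show that in the second case $K$ cannot be the attractor of any weak IFS.

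Next I would argue by contradiction in that second case. Suppose $K = \bigcup_{i=1}^n f_i(K)$ for some weak IFS $\F = \{f_1,\dots,f_n\}$ on $X$; here each $f_i$ restricts to a weak contraction $f_i\colon K \to X$. By Corollary~\ref{typweak} (applied to the typical $K$, which is infinite in this case), there is a continuous gauge function $h$ with $0 < \Hd^h(K) < \infty$ and $\Hd^h(K \cap f_i(K)) = 0$ for every $i = 1,\dots,n$. But then
\begin{equation*}
\Hd^h(K) = \Hd^h\!\left(\bigcup_{i=1}^n f_i(K)\right) = \Hd^h\!\left(\bigcup_{i=1}^n \bigl(K \cap f_i(K)\bigr)\right) \leq \sum_{i=1}^n \Hd^h\bigl(K \cap f_i(K)\bigr) = 0,
\end{equation*}
using that $f_i(K) \subseteq K$ so $f_i(K) = K \cap f_i(K)$, and countable (here finite) subadditivity of the outer measure $\Hd^h$. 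This contradicts $\Hd^h(K) > 0$, so $K$ is not a fractal of any weak IFS.

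Finally I would remark that $\Hd^h$ is a genuine Borel-regular outer measure for any gauge function $h$, so finite subadditivity is automatic and no measurability hypotheses on the sets $f_i(K)$ are needed; also $f_i(K)$ is compact, hence closed, since $K$ is compact and $f_i$ continuous, which makes the set-theoretic manipulation above completely harmless. I do not foresee a real obstacle: the substance is entirely contained in the cited results of Balka and M\'ath\'e, and the present statement is just their packaging into a dichotomy for the typical compact set. The only point that deserves a sentence of care is making sure Corollary~\ref{typweak} is applied to the \emph{same} co-meager family of $K$'s that Theorem~\ref{twzbal} describes, so that the two cases line up; since both are statements about a typical $K$ and the intersection of two co-meager sets is co-meager, this is immediate.
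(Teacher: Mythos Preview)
Your argument is correct and follows essentially the same route as the paper: invoke Corollary~\ref{typweak} to get, for the infinite typical $K$, a gauge function $h$ with $\Hd^h(K)>0$ and $\Hd^h(K\cap f(K))=0$ for every weak contraction $f$, then derive a contradiction from subadditivity applied to $K=\bigcup_i f_i(K)$. The only cosmetic difference is that your opening appeal to Theorem~\ref{twzbal} is unnecessary, since the dichotomy ``finite versus admits such an $h$'' is already the content of Corollary~\ref{typweak}; the paper simply starts from that corollary (after disposing of the countable-$X$ case in one line).
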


\begin{proof}
If $X$ is countable, then a typical compact set is finite. Assume that $X$ is uncountable. Then by Corollary~\ref{typweak} a typical compact set $K$ is either finite or there exists continuous gauge function $h$ such that $K$ is \mbox{$h$-visible} and for any weak contraction $f:K \to X$, $\Hd^h(K\cap f(K)) = 0$. Suppose that $K$ is~infinite and take $h$ from the second part of statement. Then $\Hd^h(K) > 0$. Suppose that there exists a~weak IFS $\F = \{f_1, ..., f_n\}$ such that $K$ is its fractal, i.e. $K = \bigcup_{i=1}^n f_i(K)$. We have 
$$0 < \Hd^h(K) = \Hd^h\left(\bigcup_{i=1}^n f_i(K)\right) \leq \sum_{i=1}^n \Hd^h\left(K \cap f_i(K)\right) = 0$$
which is a contradiction. Hence $K$ is not a fractal of any weak IFS.
\end{proof}

\begin{remark}\label{topol}\emph{
Note that any $q$-balanced set is a Cantor set and hence it is homeomorphic to ternary Cantor set which is clearly an IFS fractal. Moreover a union of topological fractal and a finite set is a~topological fractal. Therefore in Polish metric spaces a typical compact set is a topological fractal.
}
\end{remark}

\begin{definition}\emph{
We say that $A \in \K(X)$ is a \emph{generalized fractal} if there exists GIFS$_\infty$ $\F :=\{f_1, ..., f_n\}$ defined on $\ell_\infty(A)$ such that $f_i:\ell_\infty(A) \to X$ are contractions and  
$$A = \bigcup_{i=1}^n \overline{f_i(A \times A \times \ldots)}.$$
We will say that $\F$ \emph{witnesses that $A$ is a generalized fractal} (shortly that \emph{$\F$ witnesses}).
}
\end{definition}

\begin{remark}\label{C1automat}\emph{
Observe that if $A \in \K(X)$ and there exist continuous mappings $f_i:\ell_\infty(A) \to A$ such that $A = \bigcup_{i=1}^n f_i(A \times A \times \ldots)$, then each $f_i$ fulfills (C1). \\
\big(For any \mbox{$(K_k) \in \ell_\infty(\mathcal{K}(A))$}, $f_i(K_1 \times K_2 \times ...)  \subset A$ and so $\overline{f_i(K_1 \times K_2 \times ...)} \in \K(X)$.\big)
}
\end{remark}

We will now show that a typical compact set in Polish metric spaces is a generalized fractal. In correspondence with Theorem~\ref{zbalIFS} this result shows that by considering GIFSs$_\infty$ we can describe significantly more sets than using only IFS theory (unless we decide to change metric -- compare Remark~\ref{topol} and notes on topological fractals in the Introduction).

\begin{theorem}\label{fraktalslabego}
Let $X$ be a Polish metric space and let $q \geq 2$. Each $q$-balanced set $K\subset X$ is a~generalized fractal and a GIFS$_\infty$ $\F$ witnessing this is such that \mbox{$\on{Lip}(f) \leq \frac{1}{q}$} for $f \in \F$.  
%complete
\end{theorem}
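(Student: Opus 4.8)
The plan is to build the witnessing $\mathrm{GIFS}_\infty$ directly from the defining data of a $q$-balanced set. Write $K=\bigcap_n\bigcup_{(i_1,\dots,i_n)\in\I_n}C_{i_1,\dots,i_n}$ as in \eqref{cpostac}, and set $K_{i_1,\dots,i_n}:=K\cap C_{i_1,\dots,i_n}$. Conditions (ii), (iii), (iv) guarantee that each $K_{i_1,\dots,i_n}$ is nonempty and compact, that $\{K_{i_1,\dots,i_n}:(i_1,\dots,i_n)\in\I_n\}$ partitions $K$ into clopen pieces, and that the diameters $\on{diam}(K_{i_1,\dots,i_n})\le b_n$ shrink (indeed, since $a_{n+1}\ge na_1\cdots a_n$ and (iv) holds, one gets $b_{n}\to 0$, so $K$ is a Cantor set as noted in Remark~\ref{topol}). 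The idea is to use $\F=\{f_1,\dots,f_{a_1}\}$ with exactly $a_1$ maps, where $f_j$ should map (a copy of) all of $K$ onto the first-level piece $K_j$, contracting by a factor $\le 1/q$. Because a point of $K$ is determined by its address $(i_1,i_2,\dots)$, a contraction onto $K_j$ that sends the piece $K_{i_1,\dots,i_n}$ into $K_{j,i_1,\dots,i_n}$ is the natural ``shift'' map, and condition (iv) with the factor $q$ is exactly what forces such a map to be $1/q$-Lipschitz: if $x\in K_{i_1,\dots,i_n}$ and $y\in K_{i_1',\dots,i_n'}$ with the addresses first differing at coordinate $m\le n$, then $d(x,y)>qb_m$ while $d(f_j(x),f_j(y))\le b_{m}$ (the images lie in the common piece $K_{j,i_1,\dots,i_{m-1}}$, of diameter $\le b_m$), giving the ratio $\le 1/q$.

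The main work is that a single branch map $f_j:K\to K_j$ of the above kind need not exist as an honest function of the \emph{point} $x$ alone — the address of $x$ is well defined, so actually it does, and $f_j$ can be taken to be the homeomorphism ``prepend $j$''. But the Lipschitz estimate only controls $d(f_j(x),f_j(y))$ from above by $b_m$, not by a constant times $d(x,y)$ unless we also know $d(x,y)$ is bounded below by $qb_m$, which (iv) supplies \emph{only} when $x,y$ sit in distinct level-$m$ pieces; for $x,y$ in the same piece at all levels, $x=y$. So in fact each $f_j$ is $\frac1q$-Lipschitz on $K$ outright, and the finite IFS $\{f_1,\dots,f_{a_1}\}$ already satisfies $K=\bigcup_{j=1}^{a_1}f_j(K)$. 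This would contradict Theorem~\ref{zbalIFS}! The resolution — and the reason we are forced into the $\ell_\infty$ framework rather than a plain IFS — is that the prepend maps are \emph{not} contractions in the ambient metric $d$ of $X$: there is no control of $d(f_j(x),f_j(y))$ in terms of $d(x,y)$ when $x,y$ are deep in the \emph{same} level-$1$ piece of $K$ but the corresponding points $f_j(x),f_j(y)$ need not be close in $X$, because nothing in Definition~\ref{defzbalansowany} says the geometry is self-similar. So the genuine plan must instead \emph{decode} the point using all coordinates of a sequence.

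Concretely, I would define $f_j:\ell_\infty(K)\to X$ as follows. Given $(x_1,x_2,\dots)\in K^{\N}$, read off from $x_n$ its level-$n$ address digit $i_n(x_n)\in\{1,\dots,a_n\}$ (the unique $i$ with $x_n\in C_{\dots,i}$ at the $n$-th coordinate — well defined by (iv)), and let $f_j(x_1,x_2,\dots)$ be the unique point of $K$ with address $(j,i_1(x_1),i_2(x_2),i_3(x_3),\dots)$; this point exists because $\bigcap_n C_{j,i_1(x_1),\dots,i_n(x_n)}$ is a nested intersection of nonempty closed sets with diameters $\le b_{n+1}\to0$ in the complete space $X$. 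Continuity and the contraction estimate now come from condition (iv): if $(x_n),(y_n)\in\ell_\infty(K)$ have $d_1((x_n),(y_n))$ small, then in particular $d(x_m,y_m)$ is small for every $m$, and by (iv) (applied at coordinate $m$, with the $q$-factor) $x_m,y_m$ must then lie in the same level-$m$ piece for all small-enough scales, forcing the output addresses to agree to high depth, hence $d(f_j(x_n),f_j(y_n))$ small; quantitatively one gets $\on{Lip}(f_j)\le\frac1q$ exactly as in the one-coordinate computation sketched above, now legitimately because the hypothesis $d(x_m,y_m)>qb_m$ fails means $x_m,y_m$ share the level-$m$ piece. Finally I would verify the fixed-point equation: $\bigcup_{j=1}^{a_1}\overline{f_j(K\times K\times\cdots)}=\bigcup_{j=1}^{a_1}f_j(K^{\N})=\bigcup_{j=1}^{a_1}K_j=K$, using that $f_j(K^\N)=K_j$ is already compact (so the closure is harmless, and (C1) holds automatically by Remark~\ref{C1automat} since $f_j$ maps into $K\subset A$). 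The one genuine obstacle is pinning down the Lipschitz estimate rigorously: one must choose the diameter bounds $b_n$ so that $b_{n+1}\le\frac1q b_n$ is compatible with (iii) and (iv) — equivalently, argue that the addresses of $(x_n)$ and $(y_n)$ first differ at some coordinate $m$ with $qb_m<d_1((x_n),(y_n))$, hence $d(f_j(x_n),f_j(y_n))\le b_m\le\frac1q d_1((x_n),(y_n))$ — and handle the edge cases where the two sequences have the same address at every coordinate (then $f_j$ agrees on them) and where $d_1=0$. This is where the $q\ge2$ slack in the modified Definition~\ref{defzbalansowany}(iv) is used, and it is the only step requiring care.
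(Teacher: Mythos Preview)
Your construction has a genuine gap: the maps $f_j$ you define do not satisfy $f_j(K^{\N})=K_j$, so $\bigcup_j\overline{f_j(K^{\N})}$ is a proper subset of $K$ and $\F$ does not witness the self-similarity. The $(n+1)$-st digit of your output address is $i_n(x_n)\in\{1,\dots,a_n\}$, but an arbitrary point of $K_j$ has its $(n+1)$-st digit ranging over all of $\{1,\dots,a_{n+1}\}$; since Definition~\ref{defzbalansowany}(i) forces $a_{n+1}\ge na_1\cdots a_n>a_n$ for $n\ge2$, most points of $K_j$ are never hit. This is in fact the \emph{same} obstruction that blocks the ``prepend $j$'' IFS you discuss midway through: that map \emph{is} $\tfrac{1}{q}$-Lipschitz on $K$ by exactly the computation you give (if $x,y$ first differ at level $m$ then $d(x,y)>qb_m$ while the images sit in a common level-$m$ piece of diameter $\le b_m$); what fails is surjectivity onto $K_j$, not the contraction estimate. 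Your diagonal $\ell_\infty$ scheme inherits the identical defect, so moving to $\ell_\infty(K)$ in this way buys nothing.

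The paper's proof repairs precisely this point by using many input coordinates to manufacture each output digit. To produce the $(j+1)$-st output digit, which must range over $\{1,\dots,a_{j+1}\}$, one reads the $j$-th address digit of each of the first $a_{j+1}-1$ input points, reduces each modulo $2$, sums, and adds $1$: the result lies in $\{1,\dots,a_{j+1}\}$ and every value is attained (since $a_j\ge2$, both parities are available at every coordinate), so $f_i(K^{\N})=K_i$ exactly. The Lipschitz bound then follows much as you sketched: if the two outputs first differ at position $\eta+1$, some parity sum at level $\eta$ differs, hence some input pair $x_h,y_h$ (with $h\le a_{\eta+1}-1$) has different level-$\eta$ digits; by (iv) this gives $d(x_h,y_h)>qb_\eta$, hence $d_1\ge qb_\eta$, while both outputs lie in a common level-$\eta$ piece of diameter $\le b_\eta$. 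The growth condition (i) is exactly what makes this encoding possible with finitely many inputs per digit, and is the ingredient your diagonal scheme ignores.
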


\begin{proof}
Let $K$ be $q$-balanced set and $(a_n), (b_n)$ be proper sequences and $C_{i_1, ..., i_n}$ proper sets from the definition of $K$ (Definition~\ref{defzbalansowany}). We will define mappings from $\ell_\infty(K)$ to $X$. Since $K$ is bounded, $\ell_\infty(K) = \prod_{i=1}^\infty K$. $K$ can be written as 
\begin{equation}\label{cantor}
K = \left\{x_{(i_1, i_2, ...)}: (i_1, i_2, ...) \in \prod_{i=1}^\infty \{1, ..., a_i\} \right\}
\end{equation}
where for any $(i_1, i_2, ...) \in \prod_{i=1}^\infty \{1, ..., a_i\}$, $x_{(i_1, i_2, ...)}$ is the unique element of~$\bigcap_{n\in\N} C_{i_1, ..., i_n}$. 

Define $\mathcal{A} := \prod_{i=1}^\infty \{1, ..., a_i\}$. For any $i\in \{1,...,a_1\}$ define mapping \mbox{$f_i: \ell_\infty(K) \to K$} by
$$f_i(x_{\alpha_1}, x_{\alpha_2}, ...) := x_{i(\alpha_1, \alpha_2, ...)}$$
where $x_{\alpha_1}, x_{\alpha_2}, ... \in K$ and $\alpha_j = (\alpha_j^{(1)}, \alpha_j^{(2)}, ...) \in \mathcal{A}$ for any $j \in \N$ and 
$$i(\alpha_1, \alpha_2, ...) := (i, \beta_1, \beta_2, ...)$$
where $\beta_j := 1+ \sum_{i=1}^{a_{j+1}-1} \tilde{\alpha}_i^{(j)}$ for $j\geq 1$ and $\tilde{\alpha}_i^{(j)} := \alpha_i^{(j)} \on{mod} 2$ for $i, j\in \N$. 

Fix $i \in \{1, ..., a_1\}$. We now show that $\on{Lip}_{d}(f_i) \leq \frac{1}{q}$. 
Let $(x_{\alpha_1}, x_{\alpha_2}, ...)$, \mbox{$(x_{\beta_1}, x_{\beta_2}, ...) \in \prod_{i=1}^\infty K$}. Assume that $f_i(x_{\alpha_1}, x_{\alpha_2}, ...) \neq f_i(x_{\beta_1}, x_{\beta_2}, ...)$.
\begin{align*}
&d\left(f_i(x_{\alpha_1}, x_{\alpha_2}, ...), f_i(x_{\beta_1}, x_{\beta_2}, ...) \right) = d\left(x_{i(\alpha_1, \alpha_2, ...)}, x_{i(\beta_1, \beta_2, ...)} \right) = \\
&= d\left(x_{(i, 1+\sum_{i=1}^{a_{2}-1} \tilde{\alpha}_i^{(1)}, 1+\sum_{i=1}^{a_{3}-1} \tilde{\alpha}_i^{(2)}, ...)}, x_{(i, 1+\sum_{i=1}^{a_{2}-1} \tilde{\beta}_i^{(1)}, 1+\sum_{i=1}^{a_{3}-1} \tilde{\beta}_i^{(2)}, ...)} \right).
\end{align*}
Let 
$$\eta := \min\left\{k\in\N: \sum_{i=1}^{a_{k+1}-1} \tilde{\alpha}_i^{(k)} \neq \sum_{i=1}^{a_{k+1}-1} \tilde{\beta}_i^{(k)}\right\},$$ % \ \ \ \ \  \ \textrm{(przyjmujemy $\min \emptyset := \infty$)}.$$ 
i.e. $\eta$ indicates the first coordinate on which $i(\alpha_1, \alpha_2, ...)$ and $i(\beta_1, \beta_2, ...)$ differs. \\
If $\eta = 1$ we have $d\left(f_i(x_{\alpha_1}, x_{\alpha_2}, ...), f_i(x_{\beta_1}, x_{\beta_2}, ...) \right) \leq \on{diam}\left(C_{i} \right) \leq b_1 = b_\eta$.
Otherwise
\begin{align*}
&d\left(f_i(x_{\alpha_1}, x_{\alpha_2}, ...), f_i(x_{\beta_1}, x_{\beta_2}, ...) \right) \leq \on{diam}\left(C_{i, 1+\sum_{i=1}^{a_{2}-1} \tilde{\alpha}_i^{(1)}, ..., 1+\sum_{i=1}^{a_{\eta}-1} \tilde{\alpha}_i^{(\eta-1)}} \right) \leq b_\eta.
\end{align*}
We now estimate $d_{1}\left((x_{\alpha_1}, x_{\alpha_2}, ...), (x_{\beta_1}, x_{\beta_2}, ...) \right)$. For any $j\in \N$ define
$$\eta_j := \min\{k\in\N: \alpha_j^{(k)} \neq \beta_j^{(k)}\} \ \ \ \textrm{(assume $\min \emptyset := \infty$)}$$
i.e. for each $j$, $\eta_j$ is the first coordinate on which $\alpha_j$ and $\beta_j$ differs. Denote also
$$l := \min\{k\in\N: \min\{\eta_1, ..., \eta_{a_{k+1}-1}\} \leq k\},$$
i.e. $l$ is the first coordinate which can impact on the difference between images of $(x_{\alpha_1}, x_{\alpha_2}, ...)$ and $(x_{\beta_1}, x_{\beta_2}, ...)$. Notice that $l \leq \eta$. (Since $\sum_{i=1}^{a_{\eta+1}-1} \tilde{\alpha}_i^{(\eta)} \neq \sum_{i=1}^{a_{\eta+1}-1} \tilde{\beta}_i^{(\eta)}$, there exists some \mbox{$1\leq j\leq a_{\eta+1}-1$} such that $\alpha_j^{(\eta)} \neq \beta_j^{(\eta)}$. This implies $\eta_j \leq \eta$ and therefore $l \leq \eta$.) Additionally put 
$$h := \min\{k\in\{1,...,a_{l+1}-1\}: \eta_k \leq l\}.$$
Clearly $\eta_h \leq l$. We get
\begin{align*}
&d_{1}\left((x_{\alpha_1}, x_{\alpha_2}, ...), (x_{\beta_1}, x_{\beta_2}, ...) \right) = \sup_{n\in\N} d(x_{\alpha_n}, x_{\beta_n}) \geq d(x_{\alpha_h}, x_{\beta_h}) \geq \\
&\geq \on{dist}\left(C_{\alpha_h^{(1)}, ..., \alpha_h^{(\eta_h)}}, C_{\beta_h^{(1)}, ..., \beta_h^{(\eta_h)}}\right) \geq q b_{\eta_h} \geq q b_l \geq q b_\eta.
\end{align*}
Hence 
\begin{align*}
d\left(f_i(x_{\alpha_1}, x_{\alpha_2}, ...), f_i(x_{\beta_1}, x_{\beta_2}, ...) \right) &\leq b_\eta = \frac{1}{q} q b_\eta \leq \frac{1}{q} d_{1}\left((x_{\alpha_1}, x_{\alpha_2}, ...), (x_{\beta_1}, x_{\beta_2}, ...) \right).
\end{align*}
Therefore $\on{Lip}(f_i) \leq \frac{1}{q}$. 

% ROZWIĄZANIE PROBLEMU BRAKU KONTROLI NAD $a_{2n+1}$
% Inna definicja schematu (rozmiar $n$ sięga do 2n-1)
% Załóżmy, że mamy schemat rozmiaru $n$. Wpisujemy w niego pewien zbiór otwarty $U_{2n}$. W niego wpisujemy zbiór bazowy $B=X[U_1, ..., U_k]$ taki, że $\on{diam}(U_i) < wsp(q) * b_{2n-2}$ (tj. ma się zmieścić tam jeszcze część 2n schematu dla rozmiaru $n+1$. Sterujemy rozmiarem b_{2n} poprzez liczność $|i: U_k|$. Bierzemy dostatecznie duże odległości między kolejnymi zbiorami z $U_{2n}$

It is easy to see that $K = \bigcup_{i=1}^{a_1} f_i(K \times K \times ...)$. Indeed, if $x_{(i_1, i_2, ...)} \in K$ for some \mbox{$(i_1, i_2, ...) \in \mathcal{A} = \prod_{i=1}^\infty \{1, ..., a_i\}$} then we can take sequence \mbox{$\alpha = (\alpha_1, \alpha_2, ...) \in \ell_\infty(\mathcal{A})$} such that for any $k \in \N$:
$$\alpha_j^{(k)} := \left\{ \begin{array}{ll} 1, & j=1,...,i_{k+1}-1, \\
2, & j \geq i_{k+1} \end{array} \right.$$   
and $f_{i_1}(x_{\alpha_1}, x_{\alpha_2}, ...) = x_{(i_1, i_2, ...)}$. By Remark~\ref{C1automat}, $\F := \{f_1, ..., f_{a_1}\}$ fulfills~(C1). 
\end{proof}

%\begin{align*}
%x_1 & \ \ \sim \ \ &\left( &y_1^{(1)}, \ \ &y_2^{(1)}, \ \ &y_3^{(1)}, \ \ & \ldots \ \ \right) \\
%x_2 & \ \ \sim \ \ &\left( &y_1^{(2)}, \ \ &y_2^{(2)}, \ \ &y_3^{(2)}, \ \ &\ldots \ \ \right) \\
%\ldots & \ \ \ldots \ \ & &\ldots, \ \ &\ldots, \ \ &\ldots, \ \ &\ldots \ \ \\
%x_n & \ \ \sim \ \ &\left( &y_1^{(n)}, \ \ &y_2^{(n)}, \ \ &y_3^{(n)}, \ \ &\ldots \ \ \right) 
%\end{align*}

\begin{remark}\label{r}\emph{
We can obtain somewhat stronger result -- namely for any $r>0$, there exists a witnessing GIFS$_\infty$ $\F$ such that \mbox{$\on{Lip}(f) \leq r$} for $f \in \F$. We choose $p$ so that $q^{-p} < r$ and consider GIFS$_\infty$ $\left\{f_{(i_1, ..., i_p)}: (i_1, ..., i_p) \in \prod_{i=1}^p \{1, ..., a_i\} \right\}$ given by (we preserve former notation)
$$f_{(i_1, ..., i_p)}  (x_{\alpha_1}, x_{\alpha_2}, ...) := x_{(i_1, ..., i_p, \beta_1, \beta_2, ...)}$$
where $\beta_j := 1+ \sum_{i=1}^{a_{j+p}-1} \tilde{\alpha}_i^{(j)}$ for $j \in \N$. Then $\on{Lip}(f) \leq q^{-p} < r$ for any $f \in \F$.
% \\
%Put $\eta := \min\left\{k\in\N: \sum_{i=1}^{a_{k+p}-1} \tilde{\alpha}_i^{(k)} \neq \sum_{i=1}^{a_{k+p}-1} \tilde{\beta}_i^{(k)}\right\}$. Then if $\eta = 1$ we have 
%$$d\left(f_{(i_1, ..., i_p)}(x_{\alpha_1}, x_{\alpha_2}, ...), f_{(i_1, ..., i_p)}(x_{\beta_1}, x_{\beta_2}, ...) \right) \leq \on{diam}\left(C_{i_1, ..., i_p} \right) \leq b_p \leq q^{p-1} b_\eta.$$
%Otherwise
%\begin{align*}
%&d\left(f_{(i_1, ..., i_p)}(x_{\alpha_1}, x_{\alpha_2}, ...), f_{(i_1, ..., i_p)}(x_{\beta_1}, x_{\beta_2}, ...) \right) \leq \on{diam}\left(C_{i_1, ..., i_p, 1+\sum_{i=1}^{a_{p+1}-1} \tilde{\alpha}_i^{(1)}, ..., 1+\sum_{i=1}^{a_{\eta}-1} \tilde{\alpha}_i^{(\eta-1)}} \right) \leq b_{p+\eta-1} \leq q^{p-1} b_\eta.
%\end{align*}
%Defining $l := \min\{k\in\N: \min\{\eta_1, ..., \eta_{a_{k+p}-1}\} \leq k\}$ (then $l \leq \eta$) and \mbox{$h := \min\{k\in\{1,...,a_{l+p}-1\}: \eta_k \leq l\}$} (then $\eta_h \leq l$), we get
%\begin{align*}
%&d_{1}\left((x_{\alpha_1}, x_{\alpha_2}, ...), (x_{\beta_1}, x_{\beta_2}, ...) \right) \geq d(x_{\alpha_h}, x_{\beta_h}) \geq \on{dist}\left(C_{\alpha_h^{(1)}, ..., \alpha_h^{(\eta_h)}}, C_{\beta_h^{(1)}, ..., \beta_h^{(\eta_h)}}\right) \geq \\
%&\geq q b_{\eta_h} \geq q b_l \geq q b_\eta = \frac{1}{q^{p-2}} q^{p-1} b_{\eta}.
%\end{align*}
}  
\end{remark}

\begin{remark}\emph{
GIFS $\F$ defined in the proof of Theorem~\ref{fraktalslabego} (and also in Remark~\ref{r}) is such that any $f_i\in \F$ fulfills (C2), i.e. for any \mbox{$(K_k) \in \ell_\infty(\mathcal{K}(K))$}, $f_i(K_1 \times K_2 \times ...) \in \K(K)$. It is enough to prove that for any \mbox{$(K_k) \in \ell_\infty(\mathcal{K}(K))$} the image \mbox{$f_i(K_1 \times K_2 \times ... ) \subset K$} is closed. 
\newline
\indent Observe that $\left(x_n\right) = \left(x_{(i_1^{(n)}, i_2^{(n)}, ...)}\right)_{n\in\N} \subset K$ converges to some \mbox{$x_{(i_1, i_2, ...)} \in K$} iff for any $k\in\N$ the sequence $(i_k^{(n)})_n \subset \{1, ..., a_k\}$ is eventually constant (this follows from Definition~\ref{defzbalansowany}(iv)). 
\newline
\indent Let \mbox{$(K_k) \in \ell_\infty(\mathcal{K}(K))$} and set $f:=f_i$. We will show that \mbox{$f(K_1 \times K_2 \times ... ) \subset K$} is closed. Let $(x_n) = \left(x_{(i, i_2^{(n)}, ...)}\right) \subset f(K_1 \times K_2 \times ... )$ be convergent to some $x \in K$. For any $n\in\N$ there exists a~sequence $\left(y_1^{(n)}, y_2^{(n)}, ...\right) \in \prod_{i=1}^\infty K_i$ such that $f\left(y_1^{(n)}, y_2^{(n)}, ...\right) = x_n$. Consider addresses of $y_i^{(n)}s$, i.e. sequences \mbox{$\left(i_1^{[i, (n)]}, i_2^{[i, (n)]}, ...\right) \in \prod_{i=1}^\infty \{1, ..., a_i\}$} such that $y_i^{(n)} = x_{\left(i_1^{[i, (n)]}, i_2^{[i, (n)]}, ...\right)} \in K$. 
\newline
\indent Notice that in a multiset $A_1 := \left\langle\left(i_1^{[1, (n)]}, ..., i_{1}^{[a_2-1, (n)]} \right): n\in\N \right\rangle$ (multiset of tuples of first coordinates of addresses of $y_1^{(n)}, ..., y_{a_2-1}^{(n)}$) some $(a_2-1)$-tuple appears infinitely many times (since a tuple $\left(i_1^{[1, (n)]}, ..., i_{1}^{[a_2-1, (n)]} \right)$ belongs to finite set $\{1, ..., a_1\}^{a_2-1}$). Denote one of them by $\left(j_1^{(1)}, ..., j_{a_2-1}^{(1)}\right)$. 
\newline
Now consider only these $n\in\N$ for which the sequence $\left(y_1^{(n)}, y_2^{(n)}, ...\right)$ is such that for any $i=1,...,a_{2}-1$, $i_1^{[i, (n)]} = j_i^{(1)}$ (i.e. for $i=1,...,a_2-1$ the address of $y_i^{(n)}$ begins with $j_i^{(1)}$). Reenumerate obtained subsequence. 
%and look at a multiset 
%$$A_2 := \left\langle\left((i_1^{[1, (n)]}, i_2^{[1, (n)]}), ..., (i_{1}^{[a_3-1, (n)]}, i_2^{[a_3-1, (n)]}) \right): n\in\N \right\rangle$$
%Some $(a_3-1)$-tuple of pairs $(i_1^{[\cdot]}, i_2^{[\cdot]})$ appears infinitely many times. Denote one of them by $\left((j_1^{(1)}, j_2^{(2)}), ..., (j_{a_3-1}^{(1)}, j_{a_3-1}^{(2)})\right)$. 
%\newline
%Generally, 
\newline
\indent Assume that for some $k\in\N$ we have a sequence 
$$\left(\left(j_1^{(1)}, ..., j_1^{(k)}\right), ..., \left(j_{a_{k+1}-1}^{(1)}, ..., j_{a_{k+1}-1}^{(k)}\right)\right) \in \left(\prod_{j=1}^{k} \{1, ..., a_j\} \right)^{a_{k+1}-1}$$ 
and infinite number of sequences $\left(y_1^{(n)}, y_2^{(n)}, ...\right)$ such that for any $i=1,...,a_{{k+1}-1}$ and any $n\in\N$, $\left(i_1^{[i, (n)]}, ..., i_k^{[i, (n)]}\right) = \left(j_i^{(1)}, ..., j_i^{(k)}\right)$ \big(i.e. for any $n\in\N$ and $i=1, ..., a_{k+1}-1$, address of $y_i^{(n)}$ begins with \mbox{$\left(j_i^{(1)}, ..., j_i^{(k)}\right)$}\big). Consider a~multiset 
$$A_{k+1} := \left\langle\left((i_1^{[1, (n)]}, ..., i_{k+1}^{[1, (n)]}), ..., (i_{1}^{[a_{k+2}-1, (n)]}, ..., i_{k+1}^{[a_{k+2}-1, (n)]}) \right): n\in\N \right\rangle.$$
One of the $(a_{k+2}-1)$-tuples of $(k+1)$-tuples $(i_1^{[\cdot]}, ..., i_{k+1}^{[\cdot]})$ appears infinitely many times. We denote one of them by $\left((j_1^{(1)}, ..., j_1^{(k+1)}), ..., (j_{a_{k+2}-1}^{(1)}, ..., j_{a_{k+2}-1}^{(k+1)})\right)$. From the original sequences we leave only these $\left(y_1^{(n)}, y_2^{(n)}, ...\right)$ such that for any $i=1,...,a_{k+2}-1$ and any $n\in\N$, $\left(i_1^{[i, (n)]}, ..., i_{k+1}^{[i, (n)]}\right) = \left(j_i^{(1)}, ..., j_i^{(k+1)}\right)$. Then we reenumerate obtained subsequence.
\newline
\indent Observe that during this procedure, the sequence of bigger and bigger tuples $(j_i^{(1)}, ..., j_i^{(k)})_k$ is growing consistently -- in the sense that $j_i^{(1)}, ..., j_i^{(k)}$ are the first $k$-elements of $(j_i^{(1)}, ..., j_i^{(k+1)})$ for any $i,k\in\N$.
\newline
\indent Eventually, observe that for any $n\in\N$ the set $\left(\bigcap_{i\in\N} C_{(j_n^{(1)}, ..., j_n^{(i)})} \right) \cap K_n$ is nonempty. Fix $n\in\N$. Clearly $\bigcap_{i\in\N} C_{(j_n^{(1)}, ..., j_n^{(i)})} = x_{(j_n^{(1)}, j_n^{(2)}, ...)}$. Moreover $K_n$ contains $x_{(j_n^{(1)}, j_n^{(2)}, ...)}$. This comes from the fact that among sequence $(y_i^{(n)})_i \subset K_n$ we can find $z_1$ whose address begins with $j_n^{(1)}$, then $z_2$ whose address begins with $(j_n^{(1)}, j_n^{(2)})$ and so on. Hence we obtain sequence of elements $z_i \in K_n \cap C_{(j_n^{(1)}, ..., j_n^{(i)})}, i\in\N$, which is convergent \big(since $\on{diam}\left(C_{(j_n^{(1)}, ..., j_n^{(i)})} \right) \stackrel{i\to\infty}{\to} 0$\big). Let us denote this limit by $z^{(n)}$. Clearly, $z^{(n)}$ belongs to $C_{(j_n^{(1)}, j_n^{(2)}, ...)}$ and its address is $(j_n^{(1)}, j_n^{(2)}, ...)$. Also, since $K_n \in \mathcal{K}(K)$ is closed, $z^{(n)}$ belongs to~$K_n$. 
\newline
\indent Finally, notice that $f\left(z^{(1)}, z^{(2)}, ... \right) = x$. Let $(i, m_1, m_2, ...)$ be the address of $x$. We need to ensure that for any $k\in\N$, $\sum_{i=1}^{a_{k+1}-1} j_i^{(k)} = m_k$. This follows from the fact that since $x_n \to x$, there are some moments from which the consecutive coordinates in addresses of $x_n$s and $x$ match. This means that sequences $\left(y_1^{(n)}, y_2^{(n)}, ...\right)$ are such that their addresses fulfill $\sum_{i=1}^{a_{k+1}-1} i_k^{[i, (n)]} = m_k$ for $n$ big enough. This is exactly what $(j_1^{(k)}, ..., j_{a_{k+1}-1}^{(k)})$ fulfills (otherwise it could not appear infinitely many times during construction). 
}
\end{remark}

%\begin{corollary}\label{wn}
%Let $X$ be a perfect Polish metric space and let $q \geq 2$. Typical compact set $K\subset X$ is a generalized fractal and a GIFS$_\infty$ $\F$ witnessing this is such that \mbox{$\on{Lip}(f) \leq \frac{1}{q}$} for $f \in \F$.  
%\end{corollary}

\begin{corollary}
Let $X$ be a perfect Polish metric space. Typical compact set $K\subset X$ is a generalized fractal such that for any $r\in (0,1)$ there exists a~GIFS$_\infty$ $\F$ witnessing this with \mbox{$\on{Lip}(f) \leq r$} for $f \in \F$.  
\end{corollary}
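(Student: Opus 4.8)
The plan is to assemble the corollary directly from three facts already established: the "perfect case" clause of Theorem~\ref{twzbal}, the explicit construction in Theorem~\ref{fraktalslabego}, and its refinement in Remark~\ref{r}. First I would fix once and for all a single value $q=2$ (any fixed $q\geq 2$ would do equally well). Since $X$ is a perfect Polish metric space, Theorem~\ref{twzbal} guarantees that the set
\[
\mathcal{G} := \{K \in \K(X): K \textrm{ is } 2\textrm{-balanced}\}
\]
is co-meager in $\K(X)$. (If $X=\emptyset$ there is nothing to prove, and otherwise a perfect Polish space is uncountable, so Theorems~\ref{istnienie}--\ref{fraktalslabego} apply.)

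Next I would check that every $K \in \mathcal{G}$ already enjoys the property claimed in the corollary. Fix such a $K$ and an arbitrary $r \in (0,1)$. By Theorem~\ref{fraktalslabego}, $K$ is a generalized fractal. Applying the construction in Remark~\ref{r} with our $q=2$ and with $p \in \N$ chosen so large that $2^{-p} < r$, we obtain a GIFS$_\infty$ $\F = \{f_{(i_1,\dots,i_p)}\}$ defined on $\ell_\infty(K)$ with values in $K$, which witnesses that $K$ is a generalized fractal and which satisfies $\on{Lip}(f) \leq 2^{-p} < r$ for every $f \in \F$; condition (C1) for $\F$ is automatic by Remark~\ref{C1automat}, since these maps take values in the compact set $K$.

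Therefore the collection of all compact $K \subset X$ that are generalized fractals admitting, for each $r \in (0,1)$, a witnessing GIFS$_\infty$ with all Lipschitz constants at most $r$, contains $\mathcal{G}$. A superset of a co-meager set is co-meager, so this collection is co-meager, which means precisely that the typical compact set has the asserted property.

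I do not expect a genuine obstacle here, since all the analytic content — the explicit contractions $f_i$, the estimate $\on{Lip}(f_i) \leq 1/q$, the self-similarity identity $K = \bigcup_i f_i(K \times K \times \cdots)$, and the typicality of $q$-balanced sets in the perfect case — has already been carried out in the cited results. The only point deserving a moment's attention is that one fixed $q$ is enough: we do not need the (also available) fact that the typical $K$ is $q$-balanced simultaneously for all $q \geq 2$, because Remark~\ref{r} extracts Lipschitz constants below an arbitrary $r$ from a single $q$-balanced structure by grouping $p$ coordinates at a time.
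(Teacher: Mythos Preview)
Your proposal is correct and follows essentially the same route as the paper, which simply says ``Combine Theorem~\ref{twzbal} and Remark~\ref{r}''; you have just spelled out the combination in detail (fixing $q=2$, invoking the perfect case of Theorem~\ref{twzbal}, and applying Remark~\ref{r} for each $r$). The extra remarks about choosing $p$, Remark~\ref{C1automat}, and the sufficiency of a single $q$ are accurate elaborations rather than a different approach.
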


\begin{proof}
Combine Theorem~\ref{twzbal} and Remark~\ref{r}.
%For any $r\in (0,1)$, let $\mathcal{R}_r$ be a family of $\frac{1}{r}$-balanced subsets of $X$. By Theorem~\ref{twzbal}, $\mathcal{R}_r$ is comeager in $\K(X)$ for any $r \in (0,1)$. By Theorem~\ref{fraktalslabego}, each $\frac{1}{r}$ balanced set is a generalized fractal and a GIFS$_\infty$ witnessing this is such that its mappings satisfy $\on{Lip}(f) \leq r$. Eventually, the family $\mathcal{R} := \bigcap_{r \in (0,1)\cap \Q} \mathcal{R}_r$ is~comeager in $\K(X)$ such that for any $r\in (0,1)$ and any element of $\mathcal{R}$ there exists appropriate GIFS$_\infty$. 
\end{proof}

In case of Euclidean spaces this result can be easily extended in~order to~guarantee that a typical compact set in $\R^n$ is GIFS$_\infty$' fractal (we consider Euclidean metric on $\R^n$). 

\begin{proposition}[{\cite[Lemma 1.1]{Benyamini2000}}]~\\
Let $X$ be a metric space, $A \subset X$ and $f: A \to \R$ be such that $\on{Lip}(f)<\infty$. Then there exists $\tilde{f}: X \to \R$ such that $\tilde{f}|_A = f$ and $\on{Lip}(\tilde{f}) = \on{Lip}(f)$.
\end{proposition}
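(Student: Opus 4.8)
This is the classical McShane extension theorem for real-valued Lipschitz functions, and the plan is to write down the extremal extension explicitly and verify its properties. Set $L := \on{Lip}(f)$. We may assume $A \neq \emptyset$ (otherwise the statement is vacuous) and, since the case $L=0$ is trivial (extend the constant function $f$ by the same constant), we may also assume $L>0$. Define $\tilde f : X \to \R$ by
$$\tilde f(x) := \inf_{a \in A}\big(f(a) + L\, d(x,a)\big).$$

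First I would check that $\tilde f(x)$ is a genuine real number for every $x\in X$, i.e. that the infimum does not equal $-\infty$. Fixing some $a_0 \in A$, the inequality $|f(a)-f(a_0)|\le L\,d(a,a_0)$ together with the triangle inequality $d(a,a_0)\le d(a,x)+d(x,a_0)$ gives, for every $a\in A$,
$$f(a) + L\, d(x,a) \;\ge\; f(a_0) - L\, d(a,a_0) + L\, d(x,a) \;\ge\; f(a_0) - L\, d(x,a_0),$$
so the infimum is bounded below; taking $a=a_0$ shows it is bounded above by $f(a_0)+L\,d(x,a_0)$. Hence $\tilde f$ is well defined. Next, $\tilde f$ extends $f$: for $a'\in A$, taking $a=a'$ in the infimum yields $\tilde f(a')\le f(a')$, while for an arbitrary $a\in A$ the bound $f(a)\ge f(a')-L\,d(a,a')$ gives $f(a)+L\,d(a',a)\ge f(a')$, so $\tilde f(a')=f(a')$.

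Then I would verify that $\on{Lip}(\tilde f)\le L$. For $x,y\in X$ and any $a\in A$,
$$\tilde f(x) \;\le\; f(a) + L\, d(x,a) \;\le\; f(a) + L\, d(y,a) + L\, d(x,y);$$
taking the infimum over $a\in A$ gives $\tilde f(x)\le \tilde f(y)+L\,d(x,y)$, and by symmetry $|\tilde f(x)-\tilde f(y)|\le L\,d(x,y)$. Since $\tilde f|_A=f$, we also have $\on{Lip}(\tilde f)\ge \on{Lip}(f)=L$, and therefore $\on{Lip}(\tilde f)=L$, as required.

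The argument is essentially pure bookkeeping, so there is no serious obstacle; the only point needing a moment's care is ensuring that the defining infimum does not run off to $-\infty$, which is exactly what the triangle-inequality estimate in the first step rules out. (Alternatively one could use the dual extremal formula $\tilde f(x) = \sup_{a\in A}\big(f(a)-L\,d(x,a)\big)$, with the same verifications.)
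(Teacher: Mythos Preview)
Your proof is correct: this is exactly the classical McShane extension, and every step you wrote checks out. The paper itself does not prove this proposition at all---it simply quotes it from \cite[Lemma~1.1]{Benyamini2000}---so there is no ``paper's own proof'' to compare against; your argument is the standard one that appears in that reference.
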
 

\begin{corollary}\label{rozszerzenie}
Let $n\in\N$, $X$ be a metric space, $A \subset X$ and $f: A \to \R^n$ be such that $\on{Lip}(f)<\infty$. Then there exists $\tilde{f}: X \to \R^n$ such that $\tilde{f}|_A = f$ and $\on{Lip}(\tilde{f}) \leq \sqrt{n}\on{Lip}(f)$.
\end{corollary}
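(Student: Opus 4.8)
The plan is to reduce this vector-valued statement to the scalar extension result stated just above. First I would decompose $f$ into its coordinate functions: write $f = (f_1, \dots, f_n)$, where $f_j := \pi_j \circ f : A \to \R$ and $\pi_j : \R^n \to \R$ denotes the projection onto the $j$-th coordinate. Since each $\pi_j$ is $1$-Lipschitz with respect to the Euclidean norm on $\R^n$, we obtain $\on{Lip}(f_j) \le \on{Lip}(f) < \infty$ for every $j \in \{1, \dots, n\}$.

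Next I would apply the preceding Proposition (the scalar McShane--Whitney type extension, \cite[Lemma 1.1]{Benyamini2000}) separately to each $f_j$, obtaining $\tilde f_j : X \to \R$ with $\tilde f_j|_A = f_j$ and $\on{Lip}(\tilde f_j) = \on{Lip}(f_j) \le \on{Lip}(f)$. I would then simply set $\tilde f := (\tilde f_1, \dots, \tilde f_n) : X \to \R^n$. It is immediate that $\tilde f|_A = f$, since this holds coordinatewise.

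It remains to bound $\on{Lip}(\tilde f)$. For $x, y \in X$ one has
\[
\|\tilde f(x) - \tilde f(y)\|^2 = \sum_{j=1}^n |\tilde f_j(x) - \tilde f_j(y)|^2 \le \sum_{j=1}^n \on{Lip}(f_j)^2 \, d(x,y)^2 \le n \, \on{Lip}(f)^2 \, d(x,y)^2,
\]
hence $\|\tilde f(x) - \tilde f(y)\| \le \sqrt{n}\,\on{Lip}(f)\, d(x,y)$, i.e. $\on{Lip}(\tilde f) \le \sqrt{n}\,\on{Lip}(f)$. I do not expect any genuine obstacle here; the only noteworthy point is the $\sqrt{n}$ factor, which is the price of extending componentwise via the scalar result rather than by a norm-preserving vector-valued extension, and it is harmless for the intended application, where one only needs finiteness (and later a rescaling) of the Lipschitz constant of the extension.
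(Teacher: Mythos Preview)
Your argument is correct and is precisely the intended one: the paper states this result as a corollary of the scalar extension proposition without giving a proof, and the coordinatewise extension with the $\sqrt{n}$ bookkeeping is the standard way to derive it.
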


\begin{theorem}\label{euklides}
Let $n\in\N$, $q \geq 2$ and $K\subset \R^n$ be a $q$-balanced set. For any $r\in (0,1)$, $K$ is a fractal of some GIFS$_\infty$ $\F$ such that mappings $f\in \F$ fulfill \mbox{$\on{Lip}(f) \leq r$}. 
\end{theorem}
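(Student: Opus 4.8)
The plan is to take the witnessing GIFS$_\infty$ on $\ell_\infty(K)$ furnished by Theorem~\ref{fraktalslabego} together with Remark~\ref{r}, extend each of its maps to the whole of $\ell_\infty(\R^n)$ by means of Corollary~\ref{rozszerzenie}, and then identify $K$ with the attractor of the extended system through the uniqueness part of Theorem~\ref{se2}.

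First I would fix $r\in(0,1)$ and choose $p\in\N$ so large that $\sqrt{n}\,q^{-p}<r$ (possible since $q\ge 2$). By Remark~\ref{r} there is a GIFS$_\infty$ $\{f_1,\dots,f_N\}$ on $\ell_\infty(K)$, with each $f_j\colon\ell_\infty(K)\to K$, $\on{Lip}(f_j)\le q^{-p}$, and $K=\bigcup_{j=1}^N\overline{f_j(K\times K\times\ldots)}$; recall also that $\ell_\infty(K)=\prod_{i=1}^\infty K$ because $K$ is bounded. Regarding $\ell_\infty(K)$ as a subset of the metric space $(\ell_\infty(\R^n),d_1)$ and each $f_j$ as an $\R^n$-valued map, Corollary~\ref{rozszerzenie} yields $\tilde f_j\colon\ell_\infty(\R^n)\to\R^n$ with $\tilde f_j|_{\ell_\infty(K)}=f_j$ and $\on{Lip}(\tilde f_j)\le\sqrt{n}\,\on{Lip}(f_j)\le\sqrt{n}\,q^{-p}<r<1$. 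Thus every $\tilde f_j$ is a contraction, and since $\R^n$ is Euclidean the family $\tilde\F:=\{\tilde f_1,\dots,\tilde f_N\}$ automatically satisfies (C1) by Remark~\ref{C1eucl}; hence $\tilde\F$ is a GIFS$_\infty$ on $\R^n$ with $\on{Lip}(f)\le r$ for all $f\in\tilde\F$.

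Next I would verify that $K$ is the attractor of $\tilde\F$. Since $\tilde f_j$ agrees with $f_j$ on $\prod_{k=1}^\infty K=\ell_\infty(K)$, one has $\overline{\tilde f_j(\prod_{k=1}^\infty K)}=\overline{f_j(K\times K\times\ldots)}$ for each $j$, and therefore
\[
K=\bigcup_{j=1}^N\overline{f_j(K\times K\times\ldots)}=\bigcup_{j=1}^N\overline{\tilde f_j\Big(\prod_{k=1}^\infty K\Big)}.
\]
So $K\in\K(\R^n)$ is a solution of the fixed-point equation of Theorem~\ref{se2} for the system $\tilde\F$, and by the uniqueness assertion of that theorem $A_{\tilde\F}=K$. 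Consequently $K$ is a fractal of the GIFS$_\infty$ $\tilde\F$, all of whose maps have Lipschitz constant at most $r$, which proves the theorem.

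The only delicate point — and hence the main obstacle — is that the Lipschitz-extension step of Corollary~\ref{rozszerzenie} controls the Lipschitz constant only up to the factor $\sqrt{n}$; this is exactly why one must start from the maps of Remark~\ref{r} (with $\on{Lip}\le q^{-p}$ and $p$ chosen after $r$) rather than from those of Theorem~\ref{fraktalslabego} with $\on{Lip}\le q^{-1}$. Once the Lipschitz bound is secured everything else is bookkeeping of facts already established: (C1) is free in $\R^n$, $\ell_\infty(K)$ is literally the countable power $K^{\N}$, and $K$ already satisfies the required self-similarity identity, so the uniqueness in Theorem~\ref{se2} finishes the job.
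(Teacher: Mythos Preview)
Your proof is correct and follows essentially the same route as the paper's: invoke Remark~\ref{r} to obtain maps on $\ell_\infty(K)$ with Lipschitz constant small enough to absorb the $\sqrt{n}$ factor, extend them to $\ell_\infty(\R^n)$ via Corollary~\ref{rozszerzenie}, note (C1) is automatic in $\R^n$ by Remark~\ref{C1eucl}, and conclude. The paper simply requests $\on{Lip}(f_i)\le \frac{r}{\sqrt{n}}$ directly from Remark~\ref{r} rather than spelling out the choice of $p$, and it omits the explicit appeal to the uniqueness in Theorem~\ref{se2} that you (correctly) make; otherwise the arguments coincide.
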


\begin{proof} 
Let $r\in (0,1)$. By Remark~\ref{r}, there exists GIFS$_\infty$ $\F:=\{f_1, ..., f_p\}$ defined on $\ell_\infty(K)$ witnessing that $K$ is a generalized fractal and such that $\on{Lip}(f) \leq\frac{1}{\sqrt{n}} r$ for $i=1,...,p$.  \\
From Corollary~\ref{rozszerzenie} for each $f_i$ there exists extension $\tilde{f}_i: \ell_\infty(\R^n) \to \R^n$ with $\on{Lip}(\tilde{f}_i) \leq \sqrt{n} \on{Lip}(f_i) \leq r$. By Remark~\ref{C1eucl} each $\tilde{f}_i$ satisfies (C1). Hence $K$ is~a~fractal of the GIFS$_\infty$ $\{\tilde{f}_1, ..., \tilde{f}_{p}\}$.
%Let $K$ be a $q$-balanced set. Then there exists GIFS$_\infty$ $\F:=\{f_1, ..., f_p\}$ defined on $\ell_\infty(K)$ witnessing this fact and such that $\on{Lip}(f) \leq \frac{1}{q}$ for $i=1,...,p$.  \\
%From Corollary~\ref{rozszerzenie} for each $f_i$ there exists extension $\tilde{f}_i: \ell_\infty(\R^n) \to \R^n$ with $\on{Lip}(\tilde{f}_i) \leq \sqrt{n} \on{Lip}(f_i) = \frac{\sqrt{n}}{q}$. By Remark~\ref{C1eucl} each $\tilde{f}_i$ satisfies (C1). Hence $K$ is a fractal of the GIFS$_\infty$ $\{\tilde{f}_1, ..., \tilde{f}_{p}\}$.
\end{proof}

\begin{corollary}
Typical compact set $K \subset \R^n$ is such that for any $r\in (0,1)$ it is a fractal of some GIFS$_\infty$~$\F$ with \mbox{$\on{Lip}(f) \leq r$} for $f \in \F$.
\end{corollary}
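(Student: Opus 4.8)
The plan is to combine the two structural results already established for $\R^n$, since together they say everything we need. First I would note that $\R^n$ equipped with the Euclidean metric is a Polish metric space (separable and complete) and, what matters here, that it is \emph{perfect} — it has no isolated points. Consequently the additional clause of Theorem~\ref{twzbal} applies: fixing any $q \geq 2$ (say $q = 2$), the set $\{K \in \K(\R^n): K \text{ is } q\text{-balanced}\}$ is co-meager in $\K(\R^n)$. In particular the property ``$K$ is $q$-balanced'' holds for a typical compact subset of $\R^n$.

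Next, for each $q$-balanced set $K \subset \R^n$ and each prescribed $r \in (0,1)$, I would invoke Theorem~\ref{euklides} directly: it produces a GIFS$_\infty$ $\F$ all of whose mappings satisfy $\on{Lip}(f) \leq r$ and whose attractor in the sense of Theorem~\ref{se2} is exactly $K$. Since this holds for every $K$ in a co-meager subset of $\K(\R^n)$ and for every $r \in (0,1)$, the stated property holds for a typical compact set, which is the claim.

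There is essentially no obstacle to overcome, as all the work sits in Theorems~\ref{twzbal} and~\ref{euklides}. The only point worth a word is the order of quantifiers: Theorem~\ref{euklides} already supplies, separately for each $q$-balanced $K$ and each $r$, a witnessing system $\F = \F(K,r)$, so no uniformity over $K$ or $r$ is required. One may also observe that the conclusion does not depend on the choice of $q \geq 2$, since Theorem~\ref{twzbal} yields $q$-balancedness for all $q \geq 2$ at once on a co-meager set; taking $q = 2$ is enough.
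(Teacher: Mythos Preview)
Your argument is correct and matches the paper's intended proof: the corollary is stated immediately after Theorem~\ref{euklides} with no proof, precisely because it follows by combining the perfect-space clause of Theorem~\ref{twzbal} (applied to $\R^n$) with Theorem~\ref{euklides}, exactly as you do. Your remarks on the order of quantifiers and the irrelevance of the particular $q\geq 2$ are accurate but not needed for the bare deduction.
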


We now generalize Theorem~\ref{fraktalslabego} to cover case of sets which are union of~balanced set and~a~finite set. 

\begin{theorem}\label{union}
Let $X$ be a Polish metric space and $q \geq 2$. Let $R = K \cup P$ be a union of $q$-balanced set~$K$ and~finite set $P$ with $K \cap P = \emptyset$. Then $R$ is~a~generalized fractal.
\end{theorem}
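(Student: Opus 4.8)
The plan is to combine the $\mathrm{GIFS}_\infty$ witnessing that $K$ is a generalized fractal (Theorem~\ref{fraktalslabego}) with a handful of additional maps that sweep out the finite set $P$. Write $P = \{p_1, \ldots, p_m\}$. Since $K$ is compact and $K \cap P = \emptyset$, there is $\rho > 0$ with $\mathrm{dist}(\{p_j\}, K) > \rho$ for every $j$, and we may also assume the $p_j$ are pairwise distinct. By Theorem~\ref{fraktalslabego} (and Remark~\ref{r}) we have contractions $f_1, \ldots, f_p : \ell_\infty(K) \to K$ with arbitrarily small Lipschitz constants and $K = \bigcup_{i=1}^p f_i(K \times K \times \ldots)$; each $f_i$ fulfills (C1) by Remark~\ref{C1automat}. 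The goal is to enlarge these to maps on $\ell_\infty(R)$ and to add maps $h_1, \ldots, h_m : \ell_\infty(R) \to R$ so that $h_j$ has $p_j$ in its image, the new system still has all images inside $R$, each map is a genuine contraction, and the union of all images is exactly $R$.

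First I would deal with extending $f_1, \ldots, f_p$ from $\ell_\infty(K)$ to $\ell_\infty(R)$. The natural device is a ``retraction onto $K$'': since $K$ is compact, one can fix a map $r : R \to K$ that is the identity on $K$ — e.g. $r(p_j)$ = some fixed point of $K$ for each $j$ — and compose coordinatewise, $\bar f_i(x_1, x_2, \ldots) := f_i(r(x_1), r(x_2), \ldots)$. This $\bar f_i$ still maps into $K \subset R$ and still satisfies (C1) by Remark~\ref{C1automat}. The subtlety is that $r$ need not be Lipschitz, so $\bar f_i$ need not be a contraction on all of $\ell_\infty(R)$. This is the point I expect to be the main obstacle, and the cleanest fix is to replace $r$ by a Lipschitz ``nearest-point-type'' map: because $\mathrm{dist}(P, K) > \rho > 0$ while $\mathrm{diam}(R) < \infty$, one can build $r : R \to K$ with finite Lipschitz constant $L$ (it is locally constant near each $p_j$ and the identity on $K$, and the gap $\rho$ bounds the ratio of distances), and then choose, via Remark~\ref{r}, the $f_i$ to have Lipschitz constant below $\tfrac{1}{q L}$ — in fact below any prescribed $r \in (0,1)$ divided by $L$ — so that $\bar f_i = f_i \circ (r, r, \ldots)$ is a contraction with $\mathrm{Lip}(\bar f_i) \le L \cdot \mathrm{Lip}(f_i)$ as small as we like.

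Next I would add the maps hitting $P$. For each $j = 1, \ldots, m$ define $h_j : \ell_\infty(R) \to R$ to be the constant map $h_j \equiv p_j$; a constant map has Lipschitz constant $0$ and trivially satisfies (C1) (its image is the singleton $\{p_j\} \subset R$). Then $\mathcal F := \{\bar f_1, \ldots, \bar f_p, h_1, \ldots, h_m\}$ is a $\mathrm{GIFS}_\infty$ on $\ell_\infty(R)$ consisting of contractions, each fulfilling (C1). It remains to verify the self-similarity identity. On the one hand, $\bigcup_{i=1}^p \overline{\bar f_i(R \times R \times \ldots)} \supseteq \bigcup_{i=1}^p \overline{f_i(K \times K \times \ldots)} = K$ (restricting to sequences in $K$, where $r$ is the identity), and since each $\bar f_i$ maps into the compact set $K$ we also get $\overline{\bar f_i(R\times R\times\ldots)} \subseteq K$, so the $\bar f_i$-part contributes exactly $K$; on the other hand the $h_j$ contribute exactly $\{p_1, \ldots, p_m\} = P$. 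Hence
\begin{equation*}
\bigcup_{f \in \mathcal F} \overline{f(R \times R \times \ldots)} = K \cup P = R,
\end{equation*}
so $\mathcal F$ witnesses that $R$ is a generalized fractal. The only genuinely technical step is the construction of the Lipschitz retraction $r : R \to K$ and the bookkeeping that lets us absorb its constant $L$ by shrinking $\mathrm{Lip}(f_i)$; everything else is routine once that is in place. (One should note that the Lipschitz constants of the $\bar f_i$ can in fact be made arbitrarily small, so, as in the $q$-balanced case, the witnessing system can be taken with $\mathrm{Lip}(f) \le r$ for any prescribed $r \in (0,1)$; the constant maps $h_j$ cause no difficulty here.)
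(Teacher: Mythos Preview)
Your proposal is correct and follows essentially the same strategy as the paper: extend the witnessing $\mathrm{GIFS}_\infty$ from $\ell_\infty(K)$ to $\ell_\infty(R)$ by exploiting the positive gap $\rho=\on{dist}(K,P)$, absorb the resulting Lipschitz blow-up by invoking Remark~\ref{r} to shrink $\on{Lip}(f_i)$ in advance, and then adjoin the constant maps $h_j\equiv p_j$ to pick up the finite set. The only cosmetic difference is the extension mechanism itself: you retract coordinatewise via a Lipschitz map $r:R\to K$ (identity on $K$, constant on $P$) and set $\bar f_i=f_i\circ(r,r,\ldots)$, whereas the paper fixes a single $x\in K$ and defines $\tilde f_i$ to equal $f_i$ on $\ell_\infty(K)$ and the constant $f_i(x,x,\ldots)$ on the complement. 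Both extensions yield the same Lipschitz bound $\on{Lip}(f_i)\cdot\max\{1,\on{diam}(K)/\rho\}$ via the same three-case check, and both produce images exactly equal to $K$, so the arguments are interchangeable.
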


\begin{proof}
Let $\eta := \on{dist}(K, P) > 0$ and $r \in (0,1)$. By Remark~\ref{r} there exists GIFS$_\infty$ $\F := \{f_1, ..., f_p\}$ witnessing that $K$ is a generalized fractal (in particular $f_i$ are defined on $\ell_\infty(K)$) with $\on{Lip}(f_i) \leq r  \cdot \min \left\{1, \frac{\eta}{\on{diam}(K)} \right\}$. Fix some $x \in K$. For any $i=1,..., p$ define $\tilde{f}_i: \ell_\infty(R) \to R$ by 
$$\tilde{f}_i(x_1, x_2, ...) := \left\{ \begin{array}{lll} f_i(x_1, x_2, ...) & \textrm{ if } & (x_1, x_2, ...) \in \ell_\infty(K), \\
f_i(x, x, ...)& \textrm{ if } & (x_1, x_2, ...) \notin \ell_\infty(K). \end{array} \right.$$
Then $\on{Lip}(\tilde{f}_i) \leq  \on{Lip}{f_i} \cdot \max \left\{1, \frac{\on{diam}(K)}{\eta}\right\} \leq r$. Indeed, let \mbox{$(x_1, x_2, ...), (y_1, y_2, ...) \in \ell_\infty(R)$}. Consider cases:
\begin{itemize}
\item[1)] $(x_1, x_2, ...), (y_1, y_2, ...) \in \ell_\infty(K)$. Then 
\begin{align*}
d\left(\tilde{f}_i(x_1, x_2, ...), \tilde{f}_i(y_1, y_2, ...) \right) &= d\left(f_i(x_1, x_2, ...), f_i(y_1, y_2, ...) \right) \leq \\
&\leq \on{Lip}(f_i) d_1 ((x_1, x_2, ...), (y_1, y_2, ...)).
\end{align*}
\item[2)] $(x_1, x_2, ...), (y_1, y_2, ...) \notin \ell_\infty(K)$. Then $d\left(\tilde{f}_i(x_1, x_2, ...), \tilde{f}_i(y_1, y_2, ...) \right) = 0$.
\item[3)] $(x_1, x_2, ...) \in \ell_\infty(K)$ and $(y_1, y_2, ...) \notin \ell_\infty(K)$. Since $(y_1, y_2, ...) \notin \ell_\infty(K)$ there exists $k\in\N$ such that $y_k \in P$. Then
\begin{align*}
&d\left(\tilde{f}_i(x_1, x_2, ...), \tilde{f}_i(y_1, y_2, ...) \right) = d\left(f_i(x_1, x_2, ...), f_i(x, x, ...) \right) \leq \\
&\;\;\;\;\;\;\;\; \leq \on{Lip}(f_i) d_1 ((x_1, x_2, ...), (x, x, ...)) \leq \on{Lip}(f_i) \on{diam}(K) = \\
&\;\;\;\;\;\;\;\; = \frac{\on{Lip}(f_i) \on{diam}(K)}{\eta} \cdot \eta \leq \frac{\on{Lip}(f_i) \on{diam}(K)}{\eta} \cdot d(x_k, y_k) \leq \\
&\;\;\;\;\; \leq  \frac{\on{Lip}(f_i) \on{diam}(K)}{\eta} d_1((x_1, x_2, ...), (y_1, y_2, ...)).
\end{align*}
\end{itemize}
Additionally, for any $x \in P$ define constant mappings $h_x: \ell_\infty(R) \to R$ by~\mbox{$h_x \equiv x$}. Then $\tilde{F} := \F \cup \{h_x: x \in P\}$ is a GIFS$_\infty$ defined on $\ell_\infty(R)$ and its mappings are such that $\on{Lip}(g) \leq r$ for any $g \in \tilde{F}$. Moreover
$$R = K \cup P = \bigcup_{f \in \F} f(R \times R \times ...) \cup \bigcup_{x \in P} h_x(R \times R \times ...).$$
Therefore $R$ is a generalized fractal and $\tilde{F}$ witnesses this.
\end{proof}

\begin{corollary}
Let $X$ be a Polish metric space. Typical compact set $K\subset X$ is a generalized fractal such that for any $r\in (0,1)$ there exists a GIFS$_\infty$ $\F$ witnessing this with \mbox{$\on{Lip}(f) \leq r$} for $f \in \F$.  
\end{corollary}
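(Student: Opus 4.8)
The plan is to combine Theorem~\ref{twzbal} with Theorem~\ref{union}, treating separately the degenerate case in which the typical compact set is finite. First I would invoke Theorem~\ref{twzbal}: for a Polish metric space $X$, the set of compact $K\subset X$ that are either finite or of the form $K=K_0\cup P$, with $K_0$ a $q$-balanced set for every $q\geq 2$ and $P$ finite, is co-meager in $\K(X)$; moreover, by discarding from $P$ the (finitely many) points lying in $K_0$ we may assume $K_0\cap P=\emptyset$. It then suffices to show that every such $K$ is a generalized fractal witnessed, for each $r\in(0,1)$, by a GIFS$_\infty$ whose members have Lipschitz constant at most $r$.

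In the first case, write $K=\{x_1,\dots,x_k\}$ and, given $r\in(0,1)$, put $\F:=\{h_{x_1},\dots,h_{x_k}\}$, where $h_{x_j}:\ell_\infty(K)\to K$ is the constant map $h_{x_j}\equiv x_j$. Each $h_{x_j}$ has Lipschitz constant $0\leq r$, fulfils (C1) by Remark~\ref{C1automat}, and $K=\bigcup_{j=1}^k h_{x_j}(K\times K\times\cdots)=\bigcup_{j=1}^k\overline{h_{x_j}(K\times K\times\cdots)}$, so $\F$ witnesses that $K$ is a generalized fractal.

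In the second case, fix $r\in(0,1)$ and apply Theorem~\ref{union} to $R:=K_0\cup P$. Inspecting its proof: starting from the witnessing GIFS$_\infty$ for $K_0$ given by Remark~\ref{r} with Lipschitz bound $r\cdot\min\{1,\eta/\on{diam}(K_0)\}$ (where $\eta=\on{dist}(K_0,P)>0$), one obtains a witnessing GIFS$_\infty$ $\tilde F=\F\cup\{h_x:x\in P\}$ on $\ell_\infty(R)$ all of whose members $g$ satisfy $\on{Lip}(g)\leq r$; here the worst case is the adjustment factor $\max\{1,\on{diam}(K_0)/\eta\}$ for the extended contractions, which is exactly compensated by that initial choice, while the constant maps $h_x$ have Lipschitz constant $0$. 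Hence $R$ is a generalized fractal witnessed by $\tilde F$ with the required Lipschitz bound.

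There is essentially no real obstacle here beyond bookkeeping; the one point that must be noted explicitly is that Theorem~\ref{twzbal} does not subsume the finite case under the union form (a $q$-balanced set is necessarily nonempty, indeed of cardinality continuum), so the finite case genuinely needs the separate argument above. Once that is handled, the corollary follows by taking the common co-meager set on which the dichotomy of Theorem~\ref{twzbal} holds.
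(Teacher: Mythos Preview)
Your proposal is correct and follows precisely the route the paper intends: the corollary is stated without proof immediately after Theorem~\ref{union}, and is meant to be read as the combination of Theorem~\ref{twzbal} with (the proof of) Theorem~\ref{union}. You have filled in the two details the paper leaves implicit---the explicit treatment of the finite case via constant maps, and the observation that the Lipschitz bound $r$ is achieved inside the proof of Theorem~\ref{union} rather than in its statement---and both are handled correctly.
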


\section{Appendix: correction of the proof of Theorem~\ref{twzbal} (\cite[Theorem 4.5]{Balka2013a})}\label{appendix}
We will correct proof of the following theorem

\twzbal*

First we shall see where is the flaw. Let $X^* \subset X$ be a perfect subset of $X$ such that $X \setminus X^*$ is countable open ($X^*$ exists by Cantor-Bendixon Theorem). From the first part of proof (when considering perfect space) one~get a~family $\F^*$ which is dense $G_\delta$ in~$\K(X^*)$ and which consists of $q$-balanced sets. In order to show that the typical compact set is a union of finite set and $q$-balanced set the authors consider the mapping $R: \K(X) \to \K(X^*) \cup \{\emptyset\}$ given by
$$R(K) := K \cap X^*, \ \ \ \ K \in \K(X)$$
(put $H^d(\emptyset, A) = 1$ whenever $A \in \K(X^*)$; then $\emptyset$ is an isolated point in~\mbox{$\K(X^*)\cup\{\emptyset\}$}). \\
They show that $R$ is open and claim that $\tilde{\F} := R^{-1}(\F^* \cup \{\emptyset\})$ is dense $G_\delta$ in~$\K(X)$. However, it may not be the case since $R$ is not continuous and $\tilde{\F}$ may not be $G_\delta$. Consider following example. (We will repair the flaw afterward).

\begin{example}\emph{
Let 
$$X := \big(\{0\} \times ([0,1] \cup [2,3])\big) \cup \bigcup_{n=1}^\infty \left\{\left(\frac{1}{n}, \frac{i}{n}\right): i=0, ..., n, 2n, ..., 3n \right\}.$$
Then \mbox{$X = X^* \cup S$} where $X^* = \{0\} \times ([0,1] \cup [2,3])$ and $S = X \setminus X^*$ is a set of isolated points. $R$~is not continuous since the sequence $K_n := \{(0,0)\} \cup \left\{\left(\frac{1}{n}, \frac{i}{n}\right): i=0, ..., n \right\}$ tends to $K:= \{0\} \times [0,1]$, but $R(K_n) = \{(0,0)\}$ and $R(K) = K$. 
%In particular if we consider open set 
%$$\mathcal{U} := X^*[\{0\} \times (0,1); \{0\} \times (2,3)] \subset \K(X^*)$$
%then 
%$$x := \left( \left(\{0\} \cup \left\{\frac{1}{n}: n\in \N\right\} \right) \times \left\{\frac{1}{2}\right\} \right) \cup \left(\{0\} \times \left[2\frac{1}{4}, 2\frac{3}{4}\right] \right) \in \K(X)$$
%is such that $R(x) = \{0\} \times \left(\left\{\frac{1}{2}\right\} \cup \left[2\frac{1}{4}, 2\frac{3}{4}\right] \right) \in \mathcal{U}$ and in any neighbourhood of $R(x)$ there is a compact set which does not belong to $\mathcal{U}$ (since the sets in $\mathcal{U}$ intersect $\{0\} \times [0,1]$ and $\{0\} \times [2,3]$). Hence $R^{-1}(\mathcal{U})$ is~not open.
}
\end{example}

\begin{proof} \emph{(of Theorem~\ref{twzbal}):} 
The first assertion of the proof (when perfect space is considered) holds (\cite[Theorem 4.5]{Balka2013a}). 

Let now $X$ be a separable, complete metric space, $X^* \subset X$ be a perfect subset of $X$ such that $U:=X \setminus X^*$ is countable open. By $S$ let us denote the set of isolated points of $X$. Then $S \subset U$, \mbox{$S$ is open} and $U \subset \overline{S}$. Since $S$ is open and dense in $\overline{S}$, $\K(S)$ is open and dense in $\K(\overline{S})$. Moreover, any~element of~$\K(S)$ is finite.

%Przypomnijmy, że zbiór $A \subset X$ możemy rozważać jako podprzestrzeń metryczną przestrzeni~$X$, a więc przez $\K(A)$ oznaczamy przestrzeń metryczną niepustych i~zwartych podzbiorów $A$. 

From the first part of proof there exists a family $\F^*$ which is a dense $G_\delta$ set in $\K(X^*)$ and which consists of $q$-balanced sets. 

Let
$$\F := \left\{K \in \K(X): \ K \cap \overline{S} \subset S \textrm{ and } K \cap X^* \in \F^* \cup \{\emptyset\} \right\}.$$
Then any $K \in \F$ is a union of emptyset or $q$-balanced set and finite set consisting of isolated points. We will show that $\F$ contains a dense $G_\delta$ set. 

If $X^* \setminus \overline{S} = \emptyset$ then $\K(S)$ is open and dense in $\K(\overline{S}) = \K(X)$, so~$\K(S)$ is~comeager in $\K(X)$.

Assume that $X^* \setminus \overline{S} \neq \emptyset$. There are dense and open $\F_n \subset \K(X^*)$ such that $\F^* = \bigcap_{n\in\N} \F_n$. For any $n\in\N$ define
$$\F_n' := \left\{A \cup B \neq \emptyset: \ A \in (\F_n \cap \K(X^* \setminus \overline{S}))\cup\{\emptyset\} \ \textrm{ and } \ B\in \K(S)\cup\{\emptyset\} \right\}.$$

Let $K \in \bigcap_{n\in\N} \F_n'$. Then, since $K \in \F_1'$, $K$ is nonempty and $K = A \cup B$ where $A \in \F_1 \cap \K(X^* \setminus \overline{S}))\cup\{\emptyset\}$ and $B \in \K(S) \cup \{\emptyset\}$. If $A = \emptyset$ then $K = B \in \K(S) \subset \F$. Assume that $A \neq \emptyset$. Then $K \in \bigcap_{n\in\N} \F_n'$ implies $A \in \F_n$ for any $n\in\N$. Therefore $K \in \F$.

We~now show that $\F_n'$ are dense and open in~$\K(X)$. 
Let $n\in\N$. We~first show openness. Let $K \in \F_n'$. Assume that $K = A \cup B$ where \mbox{$A \in \F_n \cap \K(X^* \setminus \overline{S})$} and $B \in \K(S)$ (in rest of cases we conduct analogously). Since $A \in \K(X^* \setminus \overline{S})$ and $X^* \setminus \overline{S}$ is open, there exists $r_1 > 0$ such that $B(A, r_1) \subset X^* \setminus \overline{S}$. Since $\F_n$ is open in $\K(X^*)$, there exists $r_2>0$ such that~$B_H(A, r_2) \cap \K(X^*) \subset \F_n$. For $B \in \K(S)$, which is a finite union of isolated points in $X$, there exists $r_3>0$ such that \mbox{$B_H(B, r_3) = \{B\}$} (in particular $B(B,r_3) \subset B$). Then for $r:=\frac{1}{2}\min\{r_1, r_2, r_3\}$ we have
$$B_H(A\cup B, r) \subset \F_n'.$$
Indeed, let $F \in B_H(A \cup B, r)$. Notice that $B(A \cup B, r) = B(A, r) \cup B(B,r)$. Hence, since $F \in B_H(A \cup B, r)$, we have $F \subset B(A \cup B, r) = B(A,r) \cup B$. Define $C:=F \cap B(A,r)$ and $D:=F \cap B$. Then $C, D$ are disjoint and $C \in \K(X^*\setminus \overline{S})$ and $D \in \K(S) \cup \{\emptyset\}$. 
It remains to show that $C \in \F_n$. Since~\mbox{$A \cup B \subset B(F, r) = B(C, r) \cup B(D, r)$} and 
$$A \cap B(D, r) \subset A \cap B(B, 2r) \subset A \cap B = \emptyset,$$ 
 we have $A \subset B(C,r)$. Therefore $H(A, C) < r$, so \mbox{$C \in B_H(A, r) \cap \K(X^*) \subset \F_n$}. Eventually \mbox{$F = C \cup D \in \F_n'$}.

We show denseness. Let $K \in \K(X)$ and $\varepsilon>0$. Since a family of finite sets is dense in $\K(X)$, there exists $L = \{x_1, ..., x_k\} \in \K(X)$ such that~\mbox{$H(K, L) < \frac{\varepsilon}{2}$}. Consider $L_1 := L \cap (X^* \setminus \overline{S})$ and $L_2 := L \cap \overline{S}$. Assume that $L_1, L_2$ are nonempty (other cases are analogous). There exists $r>0$ such that $B_H(L_1, r) \subset \K(X^* \setminus \overline{S})$. Then $U:=B_H(L_1, \min\left\{\frac{\varepsilon}{4}, r\right\}) \subset \K(X^* \setminus \overline{S}) \subset \K(X^*)$ and since $\F_n \subset \K(X^*)$ is dense, therefore there is \mbox{$A \in \F_n \cap U \subset \F_n \cap \K(X^* \setminus \overline{S})$}. In particular $H(A, L_1) < \frac{\varepsilon}{4}$. On the other hand, $L_2 \subset \overline{S}$ is finite and for any $x \in L_2$ there exists $x' \in S$ such that $d(x, x') < \frac{\varepsilon}{4}$. Then $B := \{x': x \in L_2\} \in \K(S)$ belongs to $B_H(L_2, \frac{\varepsilon}{4}) \subset \K(S)$. We have $A \cup B \in \F_n'$ and
\begin{align*}
H(A \cup B, K) &\leq H(A \cup B, L) + H(L, K) \leq H(A, L_1) + H(B, L_2) + H(L,K) < \varepsilon.
\end{align*}
\end{proof}

\section*{Acknowledgments} 
\noindent Research is supported by \emph{Narodowe Centrum Nauki}, grant number 2017/26/A/ST1/00189. \\
The author would like to thank Filip Strobin for many discussions and valuable remarks.

\bibliography{fraktale}

%\printbibliography

\end{document}